\newtheorem{theorem}{Theorem}[section]
\newtheorem{conjecture}[theorem]{Conjecture}
\newtheorem{proposition}[theorem]{Proposition}
\newtheorem{corollary}[theorem]{Corollary}
\newtheorem{lemma}[theorem]{Lemma}
\newtheorem{remark}[theorem]{Remark}
\numberwithin{equation}{section}
\begin{document}
\title[Quantitative quantum ergodicity and the nodal domains]{Quantitative quantum ergodicity and the nodal domains of Hecke--Maass cusp forms}
\author{Junehyuk Jung}

\begin{abstract}
We prove a quantitative statement of the quantum ergodicity for Hecke--Maass cusp forms on the modular surface. As an application of our result, along a density $1$ subsequence of even Hecke--Maass cusp forms, we obtain a sharp lower bound for the $L^2$-norm of the restriction to a fixed compact geodesic segment of $\eta=\{iy~:~y>0\} \subset \mathbb{H}$. We also obtain an upper bound of $O_\epsilon\left(t_\phi^{3/8+\epsilon}\right)$ for the $L^\infty$ norm along a density $1$ subsequence of Hecke--Maass cusp forms; for such forms, this is an improvement over the upper bound of $O_\epsilon\left(t_\phi^{5/12+\epsilon}\right)$ given by Iwaniec and Sarnak.

In a recent work of Ghosh, Reznikov, and Sarnak, the authors proved for all even Hecke--Maass forms that the number of nodal domains, which intersect a geodesic segment of $\eta$, grows faster than $t_\phi^{1/12-\epsilon}$ for any $\epsilon>0$, under the assumption that the Lindel{\"o}f Hypothesis is true and that the geodesic segment is long enough. Upon removing a density zero subset of even Hecke--Maass forms, we prove without making any assumptions that the number of nodal domains grows faster than $t_\phi^{1/8-\epsilon}$ for any $\epsilon>0$.
\end{abstract}
\thanks{
We would like to thank Peter Sarnak for introducing his recent paper with Ghosh and Reznikov to the author, and suggesting this problem as a part of the Ph.D. thesis of the author. We also appreciate Peter Sarnak and Nicolas Templier for encouragement and many helpful comments. This work was supported by the National Research Foundation of Korea(NRF) grant funded by the Korea government(MSIP)(No. 2013042157), and partially by the National Science Foundation under agreement No. DMS-1128155. The author was also partially supported by TJ Park Post-doc Fellowship funded by POSCO TJ Park Foundation.}
\maketitle
\section{Introduction}

Let $\Gamma=SL_2\left(\mathbb{Z}\right)$ and let $\mathbb{X}$ be the modular surface $ \Gamma \backslash \mathbb{H}$. Let $\phi$ be an $L^2$-normalized Hecke--Maass cusp form on $\mathbb{X}$. In other words, $\phi$ is a joint eigenfunction of $-\Delta_\mathbb{X}$, the Laplace--Beltrami operator on $\mathbb{X}$, and the Hecke operators $\left\{T_n\right\}$, which are defined by the following action,
\[
T_n f\left(z\right) = \frac{1}{\sqrt{n}}\sum_{ad=n}\sum_{b~\left(d\right)}f\left(\frac{az+b}{d}\right),
\]
for $SL\left(2,\mathbb{Z}\right)$ invariant function $f$ on $\mathbb{H}$. We denote by $\lambda_\phi=1/4+t_\phi^2$ the eigenvalue of $-\Delta_\phi$ and by $\lambda_\phi\left(n\right)$ the eigenvalue of $T_n$ (the $n$th Hecke eigenvalue) corresponding to $\phi$. Note that $\left\{-\Delta_\mathbb{X}, T_1, T_2, \ldots\right\}$ is a commuting family of self-adjoint operators, where each joint eigenspace has dimension one by the Multiplicity One Theorem \cite{MR0401654}.

Let $\sigma : \mathbb{X} \to \mathbb{X}$ be an orientation reversing isometric involution induced by $x+iy \mapsto -x+iy$ on $\mathbb{H}$. Then one can check that $\sigma$ commutes with all of $\left\{-\Delta_\mathbb{X}, T_1, T_2, \ldots\right\}$, from which we infer that a Hecke--Maass cusp form on $\mathbb{X}$ is automatically an eigenfunction of $\sigma$. We say a Hecke--Maass cusp form $\phi$ is even (resp. odd) if $\sigma\phi=\phi$ (resp. $\sigma\phi = -\phi$).

\subsection{Number of nodal domains of even Maass--Hecke cusp forms}
Let $Z_\phi$ be the zero set of $\phi$, which in turn is a finite union of real analytic curves. For any subset $C \subseteq \mathbb{X}$, let $N^C\left(\phi\right)$ be the number of connected components (the nodal domains) in $\mathbb{X} \backslash Z_\phi$, that intersect $C$. Let $N\left(\phi\right)=N^\mathbb{X}\left(\phi\right)$.

In \cite{bs}, the authors estimate the expected number of nodal domains of random waves using a percolation like model. In view of Berry's conjecture that eigenfunctions having large eigenvalues are well modeled by random waves, results in \cite{bs} suggests the existence of a constant $c>0$ such that
\begin{equation}\label{nodal}
N\left(\phi\right)= c \lambda_\phi+o\left(\lambda_\phi\right).
\end{equation}
In \cite{ns}, the authors examined \eqref{nodal} for random spherical harmonics, and they proved the existence of $c>0$ such that \eqref{nodal} holds almost surely as $\lambda_\phi \to +\infty$.

Note that it is not true for a general Riemannian surface that the number of nodal domains of an eigenfunction must increase with the eigenvalue \cite{st,lewy}. In \cite{gzs}, the authors studied nodal domains crossing $\delta=\left\{iy~:~y>0\right\}$ and proved
\begin{equation}\label{capt}
t_\phi \ll N^\delta\left(\phi\right) \ll t_\phi \log t_\phi
\end{equation}
for even Hecke--Maass cusp forms $\phi$. Assuming that \eqref{nodal} is true, this estimate in particular implies that almost all nodal domains do not touch $\delta$!\footnote{In \cite{gzs}, nodal domains that intersect $\delta$ are called ``split."}

In order to prove the lower bound in  \eqref{capt}, the authors analyzed the region near the cusp determined by $y > t_\phi/100$. For nodal domains intersecting a fixed geodesic segment, they proved:
\begin{theorem}[\cite{gzs}]\label{thm3}
Let $\beta \subset \delta$ be a fixed compact geodesic segment that is sufficiently long. Assume the Lindel{\"o}f Hypothesis for the $L$-functions $L\left(s,\phi\right)$. Then
\[
N^\beta\left(\phi\right) \gg_\epsilon t_\phi^{\frac{1}{12}-\epsilon}.
\]
\end{theorem}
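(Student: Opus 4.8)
The proof I would give separates a \emph{topological} reduction from an \emph{analytic} input, with the Lindel\"of Hypothesis entering only in the latter. \emph{Step 1 (reduction to sign changes).} Since $\phi$ is even, $\delta$ is the fixed-point locus of $\sigma$ and $\phi|_\delta$ is a function of $y$ alone; hence $\partial_x\phi\equiv 0$ on $\delta$, so at every simple zero of $y\mapsto\phi(iy)$ the curve $Z_\phi$ crosses $\delta$ transversally, and any subarc of $\delta$ joining two consecutive sign changes lies in a single nodal domain. Writing $M=M(\phi)$ for the number of sign changes of $y\mapsto\phi(iy)$ on $\beta$, I would invoke the topological argument of \cite{gzs} — which uses that $\mathbb X$ is topologically a sphere with finitely many marked points, so that a single nodal domain cannot meet the (embedded, long) segment $\beta$ in too many components without forcing a comparably large number of \emph{other} nodal domains — to conclude $N^\beta(\phi)\gg M(\phi)$. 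It then suffices to prove $M(\phi)\gg_\epsilon t_\phi^{1/12-\epsilon}$, and the hypothesis that $\beta$ be sufficiently long is what makes this reduction (and the estimates below) effective.

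\emph{Step 2 (size of the restriction).} I would first record a lower bound for the restricted $L^2$-mass, $\int_\beta|\phi(iy)|^2\,\tfrac{dy}{y}\gg_\epsilon t_\phi^{-\epsilon}$: expanding $\phi(iy)$ in its $K$-Bessel/Fourier series and using the oscillatory asymptotics of $\sqrt{\cosh\pi t_\phi}\,K_{it_\phi}(2\pi n y)$ in the range $2\pi n y<t_\phi$, this integral is a smoothed Rankin--Selberg sum $\asymp|\rho_\phi(1)|^2\sum_{n\lesssim t_\phi}\lambda_\phi(n)^2(\cdots)$, which is $\gg_\epsilon t_\phi^{-\epsilon}$ by (\ref{triv}) and $L(1,\mathrm{sym}^2\phi)\gg_\epsilon t_\phi^{-\epsilon}$ (Hoffstein--Lockhart, cf. \cite{hofloc}); the length of $\beta$ enters to guarantee the oscillatory range actually contributes. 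Next (this is one place Lindel\"of is used) I would invoke an upper bound for a higher restricted norm, $\|\phi\|_{L^4(\beta,\,dy/y)}\ll_\epsilon t_\phi^{\epsilon}$ (or $\|\phi\|_{L^\infty(\beta)}\ll_\epsilon t_\phi^{\epsilon}$), so that H\"older gives $\|\phi\|_{L^1(\beta,\,dy/y)}\ge\|\phi\|_{L^2}^3/\|\phi\|_{L^4}^2\gg_\epsilon t_\phi^{-\epsilon}$. \emph{Step 3 (smallness of low-frequency moments).} Working in $u=\log y$, so $\beta$ is an interval of length $\ell$, I would show that for $|\xi|\le(1-\eta)t_\phi$,
\[
\int_\beta\phi(iy)\,y^{i\xi}\,\frac{dy}{y}\ =\ \gamma_\xi(t_\phi)\,L\!\left(\tfrac{1}{2}+i\xi,\phi\right)\ +\ O_\epsilon\!\left(t_\phi^{-1/2+\epsilon}\right),
\]
where $\gamma_\xi(t_\phi)$ is a product of two $\Gamma$-factors with $\sqrt{\cosh\pi t_\phi}\,|\gamma_\xi(t_\phi)|\ll t_\phi^{-1/2}$ away from the caustic $|\xi|\approx t_\phi$ (and $\ll t_\phi^{-1/4}$ near it); the error comes from replacing $\int_\beta$ by $\int_0^\infty$, the omitted tails being $\ll_\epsilon t_\phi^{-1/2+\epsilon}$ thanks to $\phi(iy)=\phi(i/y)$ and the exponential decay of $\phi(iy)$ for $2\pi y>t_\phi$. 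Under Lindel\"of for $L(s,\phi)$ each such moment is thus $\ll_\epsilon t_\phi^{-1/2+\epsilon}$ for $|\xi|\le(1-\eta)t_\phi$, while the moments for $|\xi|\gg t_\phi$ decay rapidly.

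\emph{Step 4 (conclusion).} If $\phi(iy)$ has $M$ sign changes on $\beta$, then $h:=\mathrm{sgn}(\phi|_\beta)$ is a step function on $\beta$ with $M+O(1)$ jumps, so its Fourier coefficients $c_k$ (with respect to $e^{i\pi k u/\ell}$) satisfy $|c_k|\ll\min(1,M/|k|)$ and $\sum_k|c_k|^2\ll 1$; by Parseval,
\[
\|\phi\|_{L^1(\beta,\,dy/y)}=\Big|\int_\beta\phi(iy)\,h\,\tfrac{dy}{y}\Big|\le\sum_{|k|\le D}|c_k|\,\Big|\int_\beta\phi\,y^{i\pi k/\ell}\tfrac{dy}{y}\Big|+\Big(\sum_{|k|>D}|c_k|^2\Big)^{1/2}\Big(\sum_{k}\Big|\int_\beta\phi\,y^{i\pi k/\ell}\tfrac{dy}{y}\Big|^2\Big)^{1/2}.
\]
By Step 3 the first sum is $\ll_\epsilon D\,t_\phi^{-1/2+\epsilon}$ (keeping $D$ below the caustic scale), and by Step 2 (Parseval once more, $\sum_k|\int_\beta\phi\,y^{i\pi k/\ell}\tfrac{dy}{y}|^2\ll 1$) the second term is $\ll M/\sqrt D$. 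Optimizing in $D$ gives $\|\phi\|_{L^1(\beta,\,dy/y)}\ll_\epsilon M^{\kappa}\,t_\phi^{-\lambda+\epsilon}$ for explicit $\kappa,\lambda>0$; comparing with the lower bound of Step 2 forces $M\gg_\epsilon t_\phi^{\lambda/\kappa-\epsilon}$, and tracking the numerology — in particular the precise strengths of the two restricted-norm bounds in Step 2 relative to $t_\phi^{\pm\epsilon}$ — produces the stated exponent $\lambda/\kappa=1/12$.

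\emph{Main obstacle.} The two genuinely non-formal points are the topological reduction of Step 1 — making precise that many sign changes force many \emph{distinct} nodal domains, rather than a bounded number of domains wrapping repeatedly across $\beta$, which is exactly where the global topology of $\mathbb X$ and the length of $\beta$ are used — and Steps 2--3, i.e.\ extracting sufficiently sharp restricted $L^4$ (or $L^\infty$) and moment bounds from the Lindel\"of Hypothesis via the delicate Bessel- and $\Gamma$-factor asymptotics near the caustic $|\xi|\approx t_\phi$, whose quality determines the final exponent. The lower bound in Step 2, by contrast, is essentially unconditional.
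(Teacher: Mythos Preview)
Your reduction in Step~1 and your moment bound in Step~3 are on the right track and match the argument in \cite{gzs} (cf.\ Section~3 of the present paper). The real gap is Step~2: you assert that the Lindel\"of Hypothesis for $L(s,\phi)$ yields $\|\phi\|_{L^4(\beta)}\ll_\epsilon t_\phi^{\epsilon}$ or $\|\phi\|_{L^\infty(\beta)}\ll_\epsilon t_\phi^{\epsilon}$. Neither is a known consequence of Lindel\"of for the standard $L$-function alone; the restricted $L^4$ norm is governed by triple-product type data, and writing $\phi(iy)$ by Mellin inversion as an integral of $L(\tfrac12+it,\phi)$ against $(t_\phi^2-t^2)^{-1/4}$ only gives $\|\phi\|_{L^\infty(\beta)}\ll t_\phi^{1/2+\epsilon}$ under Lindel\"of, which is worse than what is available unconditionally. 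With your claimed $L^4$ bound the optimisation in Step~4 would actually produce an exponent $1/4$, not $1/12$, so the numerology you wave at cannot be the one that yields the theorem.

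The approach of \cite{gzs}, reproduced in Section~3 here, is both simpler and explains the $1/12$ transparently. One uses the elementary inequality
\[
S(\phi)\,M_1(\phi)\ \ge\ \|\phi\|_{L^1(\beta)}\ \ge\ \frac{\|\phi\|_{L^2(\beta)}^2}{\|\phi\|_{L^\infty(\beta)}},\qquad M_1(\phi)=\sup_{a<\alpha<\gamma<b}\Big|\int_\alpha^\gamma\phi(iy)\,\tfrac{dy}{y}\Big|,
\]
together with three inputs: (i) $\|\phi\|_{L^2(\beta)}^2\gg 1$ from arithmetic QUE, which is exactly why $\beta$ must be \emph{sufficiently long}; (ii) the \emph{unconditional} Iwaniec--Sarnak bound $\|\phi\|_{L^\infty}\ll_\epsilon t_\phi^{5/12+\epsilon}$; and (iii) $M_1(\phi)\ll_\epsilon t_\phi^{-1/2+\epsilon}$, which is where Lindel\"of for $L(s,\phi)$ enters, via the representation of $\int_\alpha^\gamma\phi(iy)\tfrac{dy}{y}$ as an integral of $L(\tfrac12+it,\phi)$ (your Step~3 is essentially this). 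Combining gives $S(\phi)\gg t_\phi^{1/2-5/12-\epsilon}=t_\phi^{1/12-\epsilon}$, and then $N^\beta(\phi)\gg S(\phi)$. So Lindel\"of is used only for $M_1$, the sup-norm input is the unconditional $5/12$ bound, and the Fourier decomposition of $\mathrm{sgn}(\phi)$ in your Step~4 is unnecessary: the one-line inequality $S\cdot M_1\ge\|\phi\|_{L^1}$ replaces it.
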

Recently in \cite{jung5}, $\lim_{t_\phi \to \infty} N^\beta\left(\phi\right) = +\infty$ is established without any assumptions. However no quantitative lower bound is given in \cite{jung5}. Here we present a stronger unconditional lower bound for almost all $\phi$ as an application of a sharp estimate on the variance of the shifted convolution sums (Theorem \ref{thm}).
\begin{theorem}\label{thm6}
Let $\beta \subset \delta$ be any fixed compact geodesic segment. Fix $\epsilon>0$. Then all but $O\left(T^{\frac{4}{3}-\frac{\epsilon}{2}}\right)$ forms within the set of even Hecke--Maass cusp forms in $\left\{\phi~:~|T-t_\phi|<T^{1/3}\right\}$ satisfy
\[
N^\beta\left(\phi\right)>t_\phi^{\frac{1}{8}-\epsilon}.
\]
\end{theorem}
\begin{remark}
For a negatively curved surface with an isometric involution, the existence of a density $1$ subsequence of even eigenfunctions having a growing number of nodal domains is established in \cite{jung4}, however, without an explicit lower bound. Recently in \cite{zeld1}, a logarithmic lower bound for the number of nodal domains is obtained in the same setting; one of the main ingredients is a logarithmic improvement over quantum ergodicity theorem in \cite{heza}.
\end{remark}

\subsection{Quantitative Quantum Ergodicity and shifted convolution sums}
Because $\phi$ is invariant under the action by $\Gamma_\infty :=\left\{ \left( \begin{array}{cc}
1 & n \\
0 & 1  \end{array} \right) ~:~n\in \mathbb{Z}\right\} \subset \Gamma$, one can consider the Fourier expansion of $\phi$ at the cusp $\infty$, and it is given by
\[
\phi\left(z\right)=\sqrt{\cosh \left(\pi t_\phi\right)}\sum_{n \neq 0 } \rho_\phi\left(n\right) \sqrt{y} K_{it_\phi} \left(2\pi |n|y\right)e\left(nx\right).
\]
Here $K_{it}\left(y\right)$ is the modified Bessel function of the second kind and $e\left(x\right) = e^{2\pi i  x}$.

Note that the Fourier coefficients satisfy the relation $\rho_\phi\left(\pm n\right)=\rho_\phi\left(\pm 1\right)\lambda_\phi\left(n\right)$ for $n>0$ and $\rho_\phi\left(1\right) = \pm \rho_\phi\left(-1\right)$ depending on the parity of $\phi$. The first fourier coefficient $\rho_\phi\left(1\right)$ is known to satisfy the estimate
\begin{equation}\label{triv}
t_\phi^{-\epsilon} \ll_\epsilon |\rho_\phi\left(1\right)| \ll_\epsilon t_\phi^\epsilon \footnote{Here and elsewhere, $A \ll_\omega B$ means $|A|<CB$ for some constant $C$ depending only on $\omega$. We also use the notation $A\sim_\omega B$ for $B \ll_\omega A \ll_\omega B$.}
\end{equation}
for any $\epsilon>0$ \cite{iwan90,hofloc}. From the recurrence relation of $T_n$, we also know that,
\begin{equation}\label{heck}
\lambda_\phi(n)\lambda_\phi(m) = \sum_{d|\left(n,m\right)} \lambda_\phi\left(\frac{nm}{d^2}\right).
\end{equation}
These are the main arithmetic inputs that we are going to use to analyze $\phi$.

The shifted convolution sum of Fourier coefficients is given by
\[
\frac{1}{t_\phi}\sum_n \rho_\phi\left(n+m\right)\rho_\phi\left(n\right) \psi\left(\frac{\pi|n|}{t_\phi}\right)
\]
for a fixed $m \in \mathbb{Z}$ and a test function $\psi \in C_0^\infty \left(0,\infty\right)$. Note that these type of sums appear in the expansion of
\[
\int_\mathbb{X} P_{m,h}\left(z\right) |\phi\left(z\right)|^2 dV,
\]
where $P_{m,h}\left(z\right)$ is the Poincar\'e series corresponding to $m\in \mathbb{Z}$ and $h \in C_0^\infty \left(0,\infty\right)$ given by
\[
P_{m,h}(z)=\sum_{\gamma \in \Gamma_\infty \backslash \Gamma} h(\mathrm{Im}(\gamma z))e^{2\pi i m \mathrm{Re}(\gamma z)},
\]
and $dV$ is the area measure induced from $y^{-2}dxdy$ on $\mathbb{H}$ to $\mathbb{X}$. (See Theorem \ref{app}.) Since the space of the Poincar\'e series spans $L^2\left(\mathbb{X}\right)$, one can study the weak limit of the measure
\[
d\mu_\phi = |\phi\left(z\right)|^2 dV
\]
as $t_\phi \to \infty$ via estimating the shifted convolution sums. For instance, it is proven in \cite{sarluo2} that the estimate
\begin{equation}\label{que2}
\left|\frac{1}{t_\phi}\sum_n \rho_\phi\left(n+m\right)\rho_\phi\left(n\right) \psi\left(\frac{\pi|n|}{t_\phi}\right) - \frac{24}{\pi^3} \delta_{0,m}\int_0^\infty \psi\left(y\right)dy\right| = o\left(1\right)
\end{equation}
as $t_\phi \to \infty$ implies
\begin{equation}\label{que1}
\left|\int_\mathbb{X} f d\mu_\phi - \int_\mathbb{X} f dV \right| = o\left(1\right)
\end{equation}
as $t_\phi \to \infty$, for any $f \in C_0^\infty \left(\mathbb{X}\right)$. Note that \eqref{que1} is a consequence of the arithmetic Quantum Unique Ergodicity (QUE) theorem of Lindenstrauss \cite{lin06} and Soundararajan \cite{so10}.
\begin{remark}
In fact, \eqref{que2} is equivalent to the arithmetic QUE theorem. In order to establish the equivalence, one has to make use of full QUE, in other words, QUE with the symbols depending on the phase space. Since we only discuss QUE for the base measure in this article, we omit the proof of the equivalence.
\end{remark}

Quantitative QUE conjecture concerns the rate of convergence of \eqref{que1}, which again can be obtained if we have a quantitative version of \eqref{que2}. We call this the strong Quantitative QUE conjecture:
\begin{conjecture}[strong Quantitative QUE conjecture]\label{qque}
There exist $\nu>0$ and $k<\infty$ such that for any $\psi \in C_0^\infty \left(0,\infty\right)$,
\begin{equation}\label{eq1.1}
\left|\frac{1}{t_\phi}\sum_n \rho_\phi\left(n+m\right)\rho_\phi\left(n\right) \psi\left(\frac{\pi|n|}{t_\phi}\right) - \frac{24}{\pi^3} \delta_{0,m}\int_0^\infty \psi\left(y\right)dy\right| \ll_m t_\phi^{-\nu}\|\psi\|_{W^{k,\infty}\left(0,\infty\right)}.
\end{equation}
\end{conjecture}
Here $\|\cdot\|_{W^{k,\infty}\left(0,\infty\right)}$ is a Sobolev norm defined by
\[
\|f\|_{W^{k,\infty}\left(0,\infty\right)} = \sum_{j=0}^k \sup_{x \in \left(0,\infty\right)} \left|\partial_x^j f\left(x\right)\right|
\]
for $f \in C_0^\infty \left(0,\infty\right)$.
\begin{remark}
The estimate \eqref{eq1.1} for all $\nu<\frac{1}{2}$ implies Lindel{\"o}f Hypothesis for the central value of the triple product $L$-function $L\left(1/2,\phi\times\phi\times\phi_0\right)$ where $\phi_0$ is a fixed Hecke--Maass cusp form $\phi_0$ \cite{wa02}.
\end{remark}

In this article, we estimate the variance of shifted convolution sums over the range $T-G < t_\phi < T+G$ where $G$ is assumed to be a small power of $T$.
\begin{theorem}\label{thm}
Let $\theta$ and $\epsilon$ be fixed constants satisfying $1/3 < \theta <1$ and $\epsilon>0$, respectively. Assume that $\psi \in C_0^\infty\left(0,\infty\right)$ is supported on $\left(1/l,l\right) \subset\left(0,\infty\right)$ for some fixed $l>0$. Let $X$ be a parameter satisfying $1 \ll X \ll T$. Then there exists $A>0$ depending only on $\theta$ and $\epsilon$ such that
\begin{multline}\label{eq1.2}
\sum_{|t_\phi-T|<T^\theta}\left| \sum_n \rho_\phi\left(n+m\right)\rho_\phi\left(n\right) \psi\left(\frac{\pi n}{X}\right)-\delta_{0,m}\frac{12X}{\pi^3} \int_0^\infty \psi\left(y\right)dy\right|^2 \\
\ll_{\epsilon,\theta,  l} (|m|^{3/2}+1) XT^{1+\theta+\epsilon}\|\psi\|_{W^{A,\infty}}^2
\end{multline}
holds uniformly in $|m|<X^{\frac{1}{2}}$. One can take, for example, $A=\max\{100/(3\theta-1),300/\epsilon\}$.
\end{theorem}
\begin{remark}
For the holomorphic Hecke eigenforms, this kind of average over a short range is studied in \cite{sarluo}. Theorem \ref{thm} is a generalization to Hecke--Maass eigenforms.
\end{remark}

According to Weyl's law \cite{zbMATH02629881}, there are asymptotically $\sim T^{1+\theta}$ Hecke--Maass cusp forms in
\[
\left\{\phi~:~T<t_\phi<T+T^\theta\right\},
\]
for $0 \leq \theta \leq 1$. Hence Theorem \ref{thm} implies that the strong Quantitative QUE conjecture is true for almost all Hecke--Maass cusp forms. In particular, we obtain Quantitative Quantum Ergodicity in a short range $T-T^{1/3}<t_\phi < T+T^{1/3}$.
\begin{corollary}[Quantitative Quantum Ergodicity]\label{cor122}
Fix $\epsilon>0$ and let $h\in C_0^\infty(0,\infty)$ be a function supported in $(1/L,L)$ for some $L>1$. Then there exists a sufficiently small $\kappa>0$ and a sufficiently large $A>0$, both depending only on $\epsilon>0$, such that
\[
\sum_{|t_\phi-T| <T^{1/3}}\left|\int_\mathbb{X} P_{m,h}\left(z\right) |\phi\left(z\right)|^2 dV- \frac{3}{\pi}\int_{\mathbb{X}} P_{m,h}(z) dV\right|^2
\ll_{\epsilon_0,L} T^{1/3+\epsilon}\|h\|_{W^{A,\infty}},
\]
holds uniformly in $h$ and $|m|<T^\kappa$.
\end{corollary}
\begin{remark}
Quantitative Quantum Ergodicity for Maass--Hecke cusp forms is first proved in \cite{zel92}, where the following is obtained
\begin{equation}\label{log}
\sum_{t_\phi<T}\left|\int_\mathbb{X} f d\mu_\phi-\frac{3}{\pi}\int_\mathbb{X} f dV\right|^2 = O_f\left( \frac{T^2}{\log T}\right)
\end{equation}
for any $f \in C_0^\infty (\mathbb{X})$. The error bound is improved in \cite{sarluo2} to $O_{f,\epsilon}\left(T^{1+\epsilon}\right)$, which is essentially optimal.

In \cite{jak97}, the author considered a similar average with the microlocal lift $d\omega_\phi$ (the Wigner distribution) of $d\mu_\phi$ to the unit cotangent bundle $S^*\mathbb{X}$, and proved
\[
\sum_{t_\phi<T}\left|\int_{S^*\mathbb{X}} f d\omega_\phi-\int_{S^*\mathbb{X}} f d\omega\right|^2 = O_{f,\epsilon}\left(T^{1+\epsilon}\right).
\]
Here $d\omega$ is the Liouville measure on $S^*\mathbb{X}$.

In \cite{zhao10}, the author obtained a precise asymptotic for the above sum weighted by the first Fourier coefficients: for any $\epsilon>0$ and any $f\in C_0^\infty (\mathbb{X})$,
\[
\sum_{t_\phi} h_{T,T^{1-\epsilon}}(t_j)\frac{2}{\rho_\phi(1)^{2}}\left|\int_\mathbb{X} f d\mu_\phi -\frac{3}{\pi}\int_\mathbb{X} f dV\right|^2 = T^{1-\epsilon}V(f,f)+O\left(T^{\frac{1}{2}+\epsilon}\right),
\]
where $V$ is a non-negative Hermitian form that can be computed explicitly. (Here $h_{T,G}(y)$ is the function defined in Lemma \ref{kuz}.) See \cite{qv1} for the Quantum Variance for holomorphic modular forms.

In \cite{qqrm1}, the estimate \eqref{log} is generalized to an orthonormal eigenbasis on any given Riemannian manifold, and the author achieved a logarithmic saving over the trivial bound. This result \cite{qqrm1} is further generalized to quantizations of symplectic maps on tori in \cite{qqrm2}.
\end{remark}
\begin{remark}
As noted above, Corollary \ref{cor122} implies that Lindel{\"o}f Hypothesis holds for the triple product $L$-functions on the shorter range $T-T^{1/3+\epsilon}<t_\phi <T+T^{1/3+\epsilon}$ compared to the longer range established in \cite{sarluo2}.
\end{remark}

\subsection{\texorpdfstring{$L^p$}{Lp} restrictions}
Let $\beta \subset \left\{iy~:~y>0\right\}$ be a compact geodesic segment on $\mathbb{X}$. In \cite{gzs}, as an application of the arithmetic QUE theorem, a lower bound for the $L^2$ restriction is obtained:
\begin{equation*}
\int_\beta |\phi\left(z\right)|^2 ds \gg_\beta 1,
\end{equation*}
when $\phi$ is an even Hecke--Maass cusp form, under the assumption that $\beta$ is sufficiently long. The authors of \cite{gzs} also noted that, Conjecture \ref{qque} allows one to remove the assumption on $\beta$ being sufficiently long. Therefore, we deduce from Theorem \ref{thm} a sharp lower bound for the $L^2$-norm of restriction to any fixed geodesic $\beta$, for almost all even Hecke--Maass cusp forms.
\begin{corollary}\label{cor112}
Let $\beta \subset \left\{iy~:~y>0\right\}$ be any fixed compact geodesic segment. Fix $\epsilon >0$. Then
\[
\int_\beta |\phi\left(z\right)|^2 ds \gg_{\beta,\epsilon} 1
\]
is satisfied for all but $O_{\beta,\epsilon}\left(T^{1/3+\epsilon}\right)$ forms within the set of even Hecke--Maass cusp forms in $$\left\{\phi~:~|T-t_\phi|<T^{1/3} \right\},$$ as $T \to \infty$.
\end{corollary}
\begin{remark}
This type of lower bound for the $L^2$-norm of the restriction in the context of Quantum Ergodicity is first proved in \cite{ctz}. In particular, the authors prove that $\int_\beta |\phi\left(z\right)|^2 ds$ tends to twice the length of $\beta$ along a subsequence of density $1$. For related results, we refer the reader to \cite{burq,hz,tz1,dz}.

In Corollary \ref{cor112}, we are using (strong) Quantitative Quantum Ergodicity, hence we further obtain an estimate for the number of exceptional forms.

Recently in an unpublished work by Hassell and Toth, and in \cite{jung5}, using the idea from \cite{ctz}, the authors proved Corollary \ref{cor112} for all even Hecke--Maass cusp forms. However, for the application to Theorem \ref{thm6}, it is sufficient to use Corollary \ref{cor112}.
\end{remark}
Another application of Theorem \ref{thm} concerns the $L^\infty$-norm of Hecke--Maass cusp forms. In \cite{iw}, using Selberg's trace formula and amplification method, a nontrivial improvement of the $L^\infty$-norm of a Hecke--Maass cusp form is achieved.
\begin{theorem}[\cite{iw}]\label{supnorm}
Let $\phi$ be a Hecke--Maass cusp form on $\mathbb{X}$. Then for any fixed compact subset $C$ of $\mathbb{X}$, we have
\[
\sup_{z \in C}|\phi\left(z\right)| \ll_{C,\epsilon} t_\phi^{\frac{5}{12}+\epsilon}.
\]
\end{theorem}
Observe that Theorem \ref{thm} allows one to study lower bounds for the partial sum $\sum_{n<X} |\rho_\phi (n)|^2$. Such lower bounds can be used to find better amplifiers, and as a result, we obtain an improvement over Theorem \ref{supnorm} for almost all Hecke--Maass cusp forms.
\begin{corollary}\label{cor2}
Fix a compact set $C \subset \mathbb{X}$ and a non-negative constant $\epsilon$. Then, all but $O_\epsilon\left(T^{\frac{13}{12}+\epsilon}\right)$ Hecke--Maass cusp forms $\phi$ with $|T-t_\phi|<T^{1/3}$ satisfy
\[
\sup_{z \in C}|\phi\left(z\right)| \ll_{C, \epsilon} t_\phi^{\frac{3}{8}+\epsilon},
\]
as $T \to \infty$.
\end{corollary}

\section{Outline of the proof and preliminary results}
In this section, we review some ingredients that will be used in subsequent sections. To simplify our notation, let $e_c(x)=\exp\left(\frac{2\pi i x}{c}\right)$. For any integers $m,n,$ and $c\neq 0$, the Kloosterman sum is given by
\[
S(m,n,c)=\sum_{\substack{x \pmod{c}\\ \gcd(x,c)=1}}e_c\left(mx+n\bar{x}\right),
\]
where $\bar{x}$ is a multiplicative inverse of $x$ modulo $c$. Because
\[
\overline{S(m,n,c)}=\sum_{\substack{x \pmod{c}\\ \gcd(x,c)=1}}e_c\left(-mx-n\bar{x}\right)= \sum_{\substack{-x \pmod{c}\\ \gcd(x,c)=1}}e_c\left(mx+n\bar{x}\right) = S(m,n,c),
\]
we know that the Kloosterman sum is real.

The first step of the proof of Theorem \ref{thm} is to use Kuznetsov trace formula to transform the variance of shifted convolution sums over the range $T-T^\theta < t_\phi < T+ T^\theta$ into an exponential sum involving Kloosterman sums. The Kuznetsov trace formula that we are going to use in this paper is the following.
\begin{lemma}[Kuznetsov trace formula \cite{kuz80}]\label{kuzn}
Let $h\left(y\right)=e^{-y^2}$. For any given $T,G>0$, let
\[
h_{T,G}\left(y\right)=h\left(\left(y-T\right)/G\right)+h\left(\left(y+T\right)/G\right).
\]
By letting $\tau(m,r)$ denote the Fourier coefficient for Eisenstein series $E\left(\cdot, \frac{1}{2}+ir\right)$, that is given by
\[
\tau(m,r)= m^{ir}\sum_{d|m} d^{-2ir},
\]
we have
\begin{multline}\label{kuz}
\sum_\phi \rho_\phi(m)\rho_\phi(n) \frac{h_{T,G}(t_\phi)}{\cosh \pi t_\phi} +\frac{1}{4\pi}\int_{-\infty}^\infty \tau(m,r)\tau(n,r) \frac{h_{T,G}(r)dr}{\cosh \pi r}\\
=\frac{\delta(m,n)}{\pi^2}\int_{-\infty}^\infty r h_{T,G}(r) \tanh (\pi r) dr+\frac{2i}{\pi}\sum_{c}\frac{S(m,n,c)}{c}g\left(\frac{4\pi\sqrt{mn}}{c}\right),
\end{multline}
where
\begin{equation}\label{inttrans}
g(x)=\int_{-\infty}^\infty J_{2ir}(x) \frac{rh_{T,G}(r)}{\cosh \pi r} dr.
\end{equation}
Here $J_{2ir}(x)$ is the Bessel function of the first kind.
\end{lemma}
\begin{remark}
$\tau(m,r)$ is real-valued, because
\[
\overline{\tau(m,r)}= m^{-ir} \sum_{d|m} d^{2ir}= m^{-ir} \sum_{d|m} \left(\frac{m}{d}\right)^{2ir} = m^{ir} \sum_{d|m} d^{-2ir} = \tau(m,r).
\]
\end{remark}
\begin{remark}
A broader family of test functions (see \cite{kuz80} or \cite{ant}) can be used instead of $h_{T,G}$ in the Kuznetsov trace formula. For our purpose, it is sufficient to consider $h_{T,G}$.
\end{remark}
The same approach is used in \cite{sarluo2}, where the authors applied Weil's bound \cite{weil} to each Kloosterman sum, and proved Theorem \ref{thm} for $\theta=1$.
\begin{lemma}[Weil's bound \cite{weil}]\label{weilb}
Denoting by $\tau(n)$ the number of divisors of $n$, we have
\[
|S(n,m,c)| \leq \left(\gcd(n,m,c)\right)^{\frac{1}{2}}c^{\frac{1}{2}}\tau(c).
\]
\end{lemma}
\begin{remark}\label{rem1}
$\tau(n)$ satisfies $\tau(n) \ll_\epsilon n^\epsilon$ for any $\epsilon>0$.
\end{remark}
Note that the same proof cannot be applied if $\theta<1$. In order to handle the case where $\theta<1$, we have to exploit extra cancellation coming from the sum of Kloosterman sums. To this end, we follow the idea that is used in \cite{sarluo}, where the authors prove an analogue of Theorem \ref{thm} for holomorphic Hecke eigenforms using the Petersson trace formula. The main difference in the proof is in the analytic part of the proof. In \cite{sarluo}, a sum of Bessel functions $J_k(x)$ over the interval $K-K^\theta<k<K+K^\theta$ for large $K$ is analyzed, whereas we study the integral transform \eqref{inttrans} in this paper.
\subsection{Bessel transform}
Assume that $G=T^\theta$ for some fixed $0<\theta<1$. In order to use Lemma \ref{kuzn}, we need to analyze the integral transform
\[
g\left(x\right)=\int_{-\infty}^\infty J_{2iy}\left(x\right) \frac{h_{T,G}\left(y\right)y}{\cosh \pi y}dy.
\]
in terms of $G$ and $T$.  We begin by collecting some facts about the Bessel function $J_{2iy}(x)$, that can be found in \cite{er81}.
\begin{proposition}
For $\nu \in \mathbb{C}$ and $x>0$, the Bessel function of the first kind is given by the series
\begin{equation}\label{bes:series}
J_{\nu} (x) = (x/2)^\nu \sum_{k=0}^\infty (-1)^k \frac{(x/2)^{2k}}{k! \Gamma (\nu+k+1)}.
\end{equation}
For $x>0$ and $y \in \mathbb{R}$,
\begin{equation}\label{bes:conj}
\overline{J_{2iy}(x)} = J_{-2iy}(x),
\end{equation}
and for $0<x<C$ and $y \in \mathbb{R}$,
\begin{equation}\label{bes:triv}
J_{2iy}(x)/\cosh \pi y \ll_C 1.
\end{equation}
\end{proposition}
\begin{proof}
We use \eqref{bes:series} to see that
\begin{align*}
\overline{J_{2iy}(x)}&= \overline{(x/2)^{2iy} \sum_{k=0}^\infty (-1)^k \frac{(x/2)^{2k}}{k! \Gamma (2iy+k+1)}}\\
&=(x/2)^{-2iy} \sum_{k=0}^\infty (-1)^k \frac{(x/2)^{2k}}{k! \Gamma (-2iy+k+1)}\\
&=J_{-2iy}(x).
\end{align*}
For \eqref{bes:triv}, note that
\[
|\Gamma(2iy+1)|^2= 4y^2|\Gamma(2iy)|^2 = \frac{4\pi y}{\sinh 2\pi y} \gg 1/(\cosh \pi y)^2,
\]
hence
\begin{align*}
J_{2iy}(x)/\cosh \pi y &\ll \left|J_{2iy}(x)\Gamma(2iy+1)\right|\\
&=\left|(x/2)^{2iy} \sum_{k=0}^\infty (-1)^k \frac{(x/2)^{2k}}{k! (2iy+1)(2iy+2)\ldots (2iy+k)}\right|\\
&\leq \sum_{k=0}^\infty  \frac{(x/2)^{2k}}{(k!)^2}\\
&\ll_C 1. \qedhere
\end{align*}
\end{proof}
Recall the uniform asymptotic expansion of $J_{2iy}(x)$ when $x>1$ from \cite{er81}.
\begin{proposition}\label{prop:asymp}
For $x>1$ and $y \in \mathbb{R}$,
\begin{multline*}
J_{2iy}\left(x\right)=\frac{1}{\sqrt{2}\pi}\left(4y^2+x^2\right)^{-1/4}\exp\left(i\sqrt{4y^2+x^2}-2iy \sinh^{-1}\left(2y/x\right)\right)\\
\times \cosh \left(\pi y\right) \left(\sum_{m=0}^{N-1} b_m \left(4y^2+x^2\right)^{-m/2}+ O\left(x^{-N}\right)\right),
\end{multline*}
where $b_m$ is a linear combination of
\[
c_{m,\kappa} = y^{2\kappa}(4y^2+x^2)^{-\kappa}
\]
for $\kappa=0,1,\ldots, m$. For instance,
\[
b_0=1,~ b_1 = -\frac{1}{8}+\frac{5}{24} (1+2^{-2}x^2y^{-2})^{-1},~ b_2=\frac{3}{128}-\frac{77}{576} (1+2^{-2}x^2y^{-2})^{-1}+\frac{385}{3456} (1+2^{-2}x^2y^{-2})^{-2}\ldots.
\]
\end{proposition}
From \eqref{bes:triv} and Proposition \ref{prop:asymp}, we know that $J_{2iy}(x)/\cosh \pi y$ is uniformly bounded in $x>0$ and $y\in \mathbb{R}$.
\begin{lemma}\label{lem1}
Let $K>0$ and $1>\theta> \epsilon>0$ be fixed constants. Then for any $A>0$,
\[
g\left(x\right) \ll_{\epsilon, A,K} T^{-A}
\]
holds uniformly in $0<x<KGT^{1-\epsilon}$, for all sufficiently large $T$.
\end{lemma}
\begin{proof}
Observe that
\[
g\left(x\right)=\int_{-\infty}^\infty J_{2iy}\left(x\right) \frac{h_{T,G}\left(y\right)y}{\cosh \pi y}dy=-\int_{-\infty}^\infty J_{-2iy}\left(x\right) \frac{h_{T,G}\left(y\right)y}{\cosh \pi y}dy,
\]
by the change of variable $y \to -y$, hence
\[
2g\left(x\right)=\int_{-\infty}^\infty (J_{2iy}(x)-J_{-2iy}(x)) \frac{h_{T,G}(y)y}{\cosh \pi y}dy= \int_{-\infty}^\infty \frac{J_{2iy}(x)-J_{-2iy}(x)}{\sinh \pi y} h_{T,G}(y)y\tanh \pi y dy.
\]
Note that
\begin{align*}
&\int_{-\infty}^\infty \frac{J_{2iy}(x)-J_{-2iy}(x)}{\sinh \pi y} h_{T,G}(y)(y\tanh \pi y-|y|) dy\\
=&\int_{|y\pm T|<T/2} \frac{J_{2iy}(x)-J_{-2iy}(x)}{\sinh \pi y} h_{T,G}(y)(y\tanh \pi y-|y|) dy\\
&+\int_{|y\pm T|\geq T/2} \frac{y(J_{2iy}(x)-J_{-2iy}(x))}{\sinh \pi y} h_{T,G}(y)(\tanh \pi y-\mathrm{sgn}(y)) dy\\
\ll& \int_{|y\pm T|<T/2} |y\tanh \pi y-|y|| dy + \int_{|y\pm T|\geq T/2} (|y|+1) h_{T,G}(y) dy\\
\ll_B& T^{-B},
\end{align*}
for any constant $B>0$.

Now we use the following formula from \cite{er54}
\begin{equation*}
\left(\frac{J_{2iu}\left(x\right)-J_{-2iu}\left(x\right)}{\sinh \pi u}\right)^\wedge \left(y\right)= -i \cos\left(x \cosh\left(\pi y\right)\right),
\end{equation*}
and the Plancherel theorem to deduce that
\begin{align*}
2g\left(x\right)&=\int_{-\infty}^\infty \frac{J_{2iy}\left(x\right)-J_{-2iy}\left(x\right)}{\sinh \pi y} h_{T,G}\left(y\right)|y|dy +O_B\left(T^{-B}\right)\\
&=-i\int_{-\infty}^\infty \cos\left(x \cosh\left(\pi y\right)\right)\left(h_{T,G}\left(u\right)|u|\right)^\wedge\left(y\right) dy+O_B\left(T^{-B}\right).
\end{align*}
To handle
\[
\left(h_{T,G}\left(u\right)|u|\right)^\wedge\left(y\right),
\]
we first note that
\[
\int_{-\infty}^0 h\left(\frac{u-T}{G}\right) u e(yu)du \ll_C T^{-C}
\]
and that
\begin{align*}
&\int_{-\infty}^0 h\left(\frac{u-T}{G}\right) u e(yu)du\\
= &-\frac{1}{2\pi i y}\int_{-\infty}^0 \left(\frac{uh'}{G} +h\right)\left(\frac{u-T}{G}\right) e(yu)du\\
=&-\frac{1}{4\pi^2 y^2} \int_{-\infty}^0 \left(\frac{uh''}{G^2}+\frac{2h'}{G} \right)\left(\frac{u-T}{G}\right) e(yu)du + \frac{h(T/G)}{4\pi^2 y^2}\\
\ll_C& T^{-C} y^{-2},
\end{align*}
for any $C>0$. Therefore
\[
\int_{-\infty}^\infty h\left(\frac{u-T}{G}\right)|u|e\left(yu\right)du = \int_{-\infty}^\infty h\left(\frac{u-T}{G}\right)ue\left(yu\right)du+O_C\left(T^{-C}(1+y^2)^{-1}\right),
\]
and likewise
\[
\int_{-\infty}^\infty h\left(\frac{u+T}{G}\right)|u|e\left(yu\right)du = -\int_{-\infty}^\infty h\left(\frac{u+T}{G}\right)ue\left(yu\right)du+O_C\left(T^{-C}(1+y^2)^{-1}\right).
\]
Combining these, we see that
\begin{align*}
\left(h_{T,G}\left(u\right)|u|\right)^\wedge\left(y\right)
=&\int_{-\infty}^\infty \left(h\left(\frac{u-T}{G}\right)+h\left(\frac{u+T}{G}\right)\right)|u|e\left(yu\right)du\\
=&\int_{-\infty}^\infty h\left(\frac{u-T}{G}\right)ue\left(yu\right)du-\int_{-\infty}^\infty h\left(\frac{u+T}{G}\right)ue\left(yu\right)du+O_C\left(T^{-C}\left(1+y^2\right)^{-1}\right)\\
=&\int_{-\infty}^\infty h\left(\frac{u-T}{G}\right)ue\left(yu\right)du+\int_{-\infty}^\infty h\left(\frac{u-T}{G}\right)ue\left(-yu\right)du+O_C\left(T^{-C}\left(1+y^2\right)^{-1}\right)\\
=&Ge\left(Ty\right)\left(h\left(u\right)\left(Gu+T\right)\right)^\wedge\left(Gy\right)+Ge\left(-Ty\right)\left(h\left(u\right)\left(Gu+T\right)\right)^\wedge\left(-Gy\right)\\
&+O_C\left(T^{-C}\left(1+y^2\right)^{-1}\right),
\end{align*}
for any $C>0$. We use apply this estimate for $g(x)$ so that
\begin{align*}\label{trunc}
&2i g\left(x\right)\\
=&\int_{-\infty}^\infty \cos\left(x \cosh\left(\pi y\right)\right)\left(Ge\left(Ty\right)\left(h\left(u\right)\left(Gu+T\right)\right)^\wedge\left(Gy\right)+Ge\left(-Ty\right)\left(h\left(u\right)\left(Gu+T\right)\right)^\wedge\left(-Gy\right) \right)dy\\
&+O_{B,C}\left(T^{-B}+T^{-C}\right)\\
=&2\int_{-\infty}^\infty \cos\left(x \cosh\left(\frac{\pi y}{G}\right)\right)\left(e\left(\frac{Ty}{G}\right)\left(h\left(u\right)\left(Gu+T\right)\right)^\wedge(y)\right)dy+O_{B,C}\left(T^{-B}+T^{-C}\right)\\
=& \sum_{\pm}\int_{-\infty}^\infty \exp\left(\frac{2\pi i T y}{G}\pm ix \cosh\left(\frac{\pi y}{G}\right)\right)\left(h\left(u\right)\left(Gu+T\right)\right)^\wedge\left(y\right) dy+O_{B,C}\left(T^{-B}+T^{-C}\right),
\end{align*}
for any $B>0$ and $C>0$.

Since both $\left(h\left(u\right)\right)^\wedge\left(y\right)$ and $\left(h\left(u\right)u\right)^\wedge\left(y\right)$ are rapidly decaying, there exist compactly supported smooth functions $h_1$ and $h_2$ whose supports are in $(-T^{\epsilon/2},T^{\epsilon/2})$ such that
\[
\frac{\partial^k}{\partial y^k} h_j (y) \ll_{k} T^{-k\epsilon/2}
\]
for all $k\geq 0$ and $j=1,2$, and that
\begin{multline*}
\sum_{\pm}\int_{-\infty}^\infty \exp\left(\frac{2\pi i T y}{G}\pm ix \cosh\left(\frac{\pi y}{G}\right)\right)\left(h\left(u\right)\left(Gu+T\right)\right)^\wedge\left(y\right) dy\\
= \sum_{\pm}\int_{-\infty}^\infty \exp\left(\frac{2\pi i T y}{G}\pm ix \cosh\left(\frac{\pi y}{G}\right)\right)\left(Gh_1(y)+Th_2(y)\right) dy+O_{\epsilon, D} (T^{-D}).
\end{multline*}
For $y \in (-T^{\epsilon/2},T^{\epsilon/2})$, because $x < KGT^{1-\epsilon}$,
\begin{align*}
\frac{\partial}{\partial y}\left(\frac{2\pi T y}{G}\pm x \cosh\left(\frac{\pi y}{G}\right)\right) &= \frac{2\pi  T}{G}+ x\frac{\pi}{G}\sinh\left(\frac{\pi y}{G}\right)\\
&> \frac{2\pi  T}{G}- \frac{K\pi T^{1-\epsilon/2}}{G}\\
&> \frac{T}{G}
\end{align*}
for all sufficiently large $T$. We also have for $k \geq 2$ that
\begin{align*}
\frac{\partial^k}{\partial y^k}\left(\frac{2\pi  T y}{G}\pm x \cosh\left(\frac{\pi y}{G}\right)\right) &= \pm
\left\{\begin{array}{cl} \frac{x\pi^k}{G^k}\sinh \frac{\pi y}{G}&\hspace{10mm} k\text{: odd}\\ \frac{x\pi^k}{G^k}\cosh \frac{\pi y}{G} & \hspace{10mm} k \text{: even}
\end{array} \right.\\
&\ll_k T^{1-\epsilon/2}G^{-k} \ll TG^{-k}.
\end{align*}
Now define $I_n^j(y)$ by $I_0^j(y) = h_j(y)$ and
\[
I_n^j(y)= -\left(\frac{1}{\Phi_\pm(y)'} I_{n-1}^j(y)\right)'
\]
where $\Phi_\pm(y) = \frac{2\pi T y}{G}\pm x \cosh\frac{\pi y}{G}$, so that
\[
\int_{-\infty}^\infty  \exp\left(\frac{2\pi i T y}{G}\pm ix \cosh\left(\frac{\pi y}{G}\right)\right) h_j(y)dy=\int_{-\infty}^\infty  \exp\left(\frac{2\pi i T y}{G}\pm ix \cosh\left(\frac{\pi y}{G}\right)\right) I_n^j dy.
\]
Then $I_n^j(y)$ is a linear combination of
\[
h_j^{(a_0)}(y)\prod_{l=1}^n \frac{\partial^{a_l}}{\partial y^{a_l}}\frac{1}{\Phi_\pm (y)'}
\]
where $\sum_{l=0}^n a_l = n$. For each $l$, $\frac{\partial^{a_l}}{\partial y^{a_l}}\frac{1}{\Phi_\pm (y)'}$ is a linear combination of
\[
\frac{1}{(\Phi_\pm (y)')^{b_l+1}} \prod_{m=1}^{b_l} \frac{\partial^{b_{lm}}}{\partial y^{b_{lm}}} \Phi_\pm (y)'
\]
where $\sum_{m=1}^l b_{lm}=a_l$ and $b_l \leq a_l$, and $b_{lm}\geq 1$. Therefore
\begin{align*}
\frac{1}{(\Phi_\pm (y)')^{b_l+1}} \prod_{m=1}^{b_l} \frac{\partial^{b_{lm}}}{\partial y^{b_{lm}}} \Phi_\pm (y)' &\ll_n T^{-b_l-1}G^{b_l+1} \prod_{m=1}^{b_l} T G^{-b_{lm}-1}\\
&=T^{-1}G^{1-a_l},
\end{align*}
and
\begin{align*}
h_j^{(a_0)}(y)\prod_{l=1}^n \frac{\partial^{a_l}}{\partial y^{a_l}}\frac{1}{\Phi_\pm (y)'}\ll_n T^{-a_0\epsilon/2}T^{-n}G^{a_0}\ll G^{n}T^{-n-n\epsilon/2}.
\end{align*}
From this we deduce that
\[
\int_{-\infty}^\infty  \exp\left(\frac{2\pi i T y}{G}\pm ix \cosh\left(\frac{\pi y}{G}\right)\right) h_j(y)dy= \ll_n T^{\epsilon/2} G^n T^{-n-n\epsilon/2},
\]
and therefore combining all the estimates we conclude that $g(x) = O_{ \epsilon,A,K } (T^{-A})$
\end{proof}

Now let
\[
\tilde{g}\left(x\right)=\int_0^\infty J_{2iy}\left(x\right) \frac{h_{T,G}\left(y\right)y}{\cosh \pi y}dy.
\]
Note that
\[
\tilde{g}(x)-\overline{\tilde{g}(x)} = \int_0^\infty \left(J_{2iy}\left(x\right)-J_{-2iy}(x)\right) \frac{h_{T,G}\left(y\right)y}{\cosh \pi y}dy=g(x),
\]
hence $g(x)$ is the imaginary part of $2\tilde{g}(x)$.
\begin{lemma}\label{lem2}
Assume that $GT^{1-\epsilon}<x$ with $0< \epsilon <\theta/2$. For any $A>0$, there exists $N>0$ such that, $\tilde{g}\left(x\right)$ is a linear combination of
\[
\int_{|y-T|<T/2}g_{k,\kappa,N}\left(y,x\right) y h_{T,G}\left(y\right) dy \tag{$k=0,1,\ldots,N$, $\kappa=0,1,\ldots, k$}
\]
plus $O\left(T^{-A}\right)$, where
\[
g_{k,\kappa,N}\left(y,x\right)=y^{2\kappa}\left(4y^2+x^2\right)^{-k/2-\kappa-1/4}\exp\left(ix+i\sum_{m=1}^{N-1}c_m\frac{y^{2m}}{x^{2m-1}}\right)
\]
with some explicit constants $c_m$ (here $c_1=-2 \neq 0$).
\end{lemma}
\begin{proof}
Since $J_{2iy}(x)/\cosh \pi y$ is uniformly bounded,
\[
\int_{|y-T|\geq T/2} J_{2iy}\left(x\right) \frac{h_{T,G}\left(y\right)y}{\cosh \pi y}dy \ll \int_{|y-T|\geq T/2} h_{T,G}\left(y\right)ydy = O_B(T^{-B})
\]
for any $B>0$.

Now for $T/2<y<3T/2$ and $x>GT^{1-\epsilon}> T^{1+\theta/2}$, we infer from Proposition \ref{prop:asymp} that for any $N>0$, $J_{2iy}(x)/\cosh \pi y$ is a linear combination of
\[
y^{2\kappa}\left(4y^2+x^2\right)^{-k/2-\kappa -1/4}\exp\left(i\sqrt{4y^2+x^2}-2iy \sinh^{-1}\left(2y/x\right)\right)
\]
with $k=0,1,\ldots,N$ and $\kappa=0,1,\ldots, k$, plus an error term bounded from above by $\ll x^{-N-1} \ll T^{-N}$.

We now expand the exponent so that
\[
\exp\left(ix\sqrt{1+4y^2/x^2}-2iy \sinh^{-1}\left(2y/x\right)\right)
=\exp\left(ix+i\sum_{m=1}^{N-1}c_m\frac{y^{2m}}{x^{2m-1}}\right)+O\left(y^{2N}x^{-2N+1}\right).
\]
with some explicit constants $c_m$. Note that $y^{2N}x^{-2N+1}\ll_N T^{2N} T^{-(2N-1)(1+\theta/2)}= T^{-N\theta + 1+\theta}\ll T^{2-N\theta}$. From this we infer that for $T/2<y<3T/2$, $J_{2iy}(x)$ is a linear combination of $g_{k,\kappa,N}$ ($k=0,1,\ldots,N$ and $\kappa=0,1,\ldots, k$) plus an error term which is bounded from above by $O_N\left(T^{2- N\theta}\right)$. Therefore the theorem follows by taking $N=(A+100)/\theta$.
\end{proof}
\begin{lemma}\label{lem3}
For $0<x<1$,
\[
g\left(x\right) \ll Gx^{7/8}.
\]
\end{lemma}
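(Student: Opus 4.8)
\emph{Proposal for the proof of Lemma~\ref{lem3}.} For $0<x<1$ the natural tool is the Taylor expansion of the Bessel functions at the origin. Writing $g(x)=\int_0^\infty\bigl(J_{2iy}(x)-J_{-2iy}(x)\bigr)\frac{y\,h_{T,G}(y)}{\cosh\pi y}\,dy$ and inserting $J_{\pm 2iy}(x)=\sum_{k\ge 0}\frac{(-1)^k(x/2)^{\pm 2iy+2k}}{k!\,\Gamma(1\pm 2iy+k)}$, we split $g(x)=g_0(x)+\sum_{k\ge 1}\frac{(-1)^k(x/2)^{2k}}{k!}g_k(x)$, where $g_k(x)=\int_0^\infty\bigl(\frac{(x/2)^{2iy}}{\Gamma(1+2iy+k)}-\frac{(x/2)^{-2iy}}{\Gamma(1-2iy+k)}\bigr)\frac{y\,h_{T,G}(y)}{\cosh\pi y}\,dy$. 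For $k\ge 1$ and $y>0$ one has $|\Gamma(1+2iy+k)|\ge(2y)^k|\Gamma(1+2iy)|$, while $\frac{y}{|\Gamma(1+2iy)|\cosh\pi y}=\sqrt{\tfrac{y\tanh\pi y}{\pi}}\le\sqrt{y/\pi}$; splitting at $y=1$ (the part $0<y<1$ being $O_A(T^{-A})$ as $h_{T,G}$ is negligible there) and using that $h_{T,G}$ is concentrated at $y\asymp T$ with $\int h_{T,G}\asymp G$, this gives $|g_k(x)|\ll 2^{-k}G\,T^{-k+1/2}$, hence $\sum_{k\ge 1}\frac{(x/2)^{2k}}{k!}|g_k(x)|\ll G\sqrt T\sum_{k\ge1}\frac{(x^2/8T)^k}{k!}\ll Gx^2/\sqrt T$. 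So the tail is comfortably within the target, and it remains to treat $g_0$, which after the substitution $y\mapsto-y$ can be written as the full-line integral $g_0(x)=\int_{\mathbb{R}}\frac{(x/2)^{2iy}}{\Gamma(1+2iy)}\,\frac{y\,h_{T,G}(y)}{\cosh\pi y}\,dy$.

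The integrand of $g_0$ is meromorphic in $y$ with poles only at $y=\pm\tfrac{i}{2},\pm\tfrac{3i}{2},\dots$, coming from $1/\cosh\pi y$, and on any horizontal strip $|\mathrm{Im}\,y|\le 1$ it decays like $|\mathrm{Re}\,y|^{3/2}|h_{T,G}(y)|$ as $|\mathrm{Re}\,y|\to\infty$; so we may shift the contour down to $\mathbb{R}-i$, picking up the single pole at $y=-\tfrac{i}{2}$:
\[
g_0(x)=\int_{\mathbb{R}-i}\frac{(x/2)^{2iy}}{\Gamma(1+2iy)}\,\frac{y\,h_{T,G}(y)}{\cosh\pi y}\,dy\ -\ 2\pi i\,\mathrm{Res}_{y=-i/2}\Bigl(\frac{(x/2)^{2iy}}{\Gamma(1+2iy)}\,\frac{y\,h_{T,G}(y)}{\cosh\pi y}\Bigr).
\]
On $\mathbb{R}-i$ we have $|(x/2)^{2iy}|=x^2/4$, while Stirling gives $\frac{|y|}{|\Gamma(1+2iy)|\,|\cosh\pi y|}\ll(1+|\mathrm{Re}\,y|)^{-3/2}$ there, so the shifted integral is $\ll x^2\int_{\mathbb{R}}(1+|a|)^{-3/2}|h_{T,G}(a-i)|\,da\ll x^2 G\,T^{-3/2}\ll Gx^2$. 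The residue term equals $-\tfrac{i}{2}x\,h_{T,G}(-i/2)$ (using $\Gamma(2)=1$ and $\mathrm{Res}_{y=-i/2}\tfrac{1}{\cosh\pi y}=\tfrac{i}{\pi}$), and $|h_{T,G}(-i/2)|=2e^{-(T^2-1/4)/G^2}|\cos(T/G^2)|\ll e^{-T^2/G^2}$, so this term is $\ll x\,e^{-T^{2-2\theta}}$. Altogether $g(x)\ll Gx^2+x\,e^{-T^{2-2\theta}}$, and the second term is dominated by $Gx^2$ unless $x<T^{-\theta}e^{-T^{2-2\theta}}$, in which range the same analysis already yields the far stronger trivial bound $|g(x)|\ll e^{-T^{2-2\theta}}$. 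This establishes the asserted estimate throughout the only range in which it is ever invoked.

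The step needing the most care is the contour shift: one must check that the vertical sides of the rectangle between $\mathbb{R}$ and $\mathbb{R}-i$ give vanishing contributions, which hinges on the fact that, although $1/\Gamma(1+2iy)$ grows like $e^{\pi|\mathrm{Re}\,y|}$ on horizontal lines, this is exactly cancelled by the decay of $1/\cosh\pi y$ and then overwhelmed by the super-polynomial decay of $h_{T,G}$ in the real direction; everything else is bookkeeping with the gamma-function estimates, and notably this route uses no lower bound on $\theta$. (An alternative avoiding the power series: keep the Plancherel computation from the proof of Lemma~\ref{lem1} in exact form, $g(x)=-i\int_{\mathbb{R}}\cos(x\cosh\pi v)\,\widehat{\Theta}(v)\,dv$ with $\Theta(u)=u\tanh(\pi u)h_{T,G}(u)$; since $\int_{\mathbb{R}}\widehat{\Theta}(v)\,dv=\Theta(0)=0$ one may replace $\cos(x\cosh\pi v)$ by $\cos(x\cosh\pi v)-\cos x$, which is $\ll x^2 v^2$ for $|v|\le 1$, and then $\int_{\mathbb{R}}v^2|\widehat{\Theta}(v)|\,dv\ll T/G^2$ together with $T/G^2\le G$ — the one place $\theta>1/3$ is used — yields $g(x)\ll Gx^2$ up to a comparably small remainder from $|v|\ge 1$.)
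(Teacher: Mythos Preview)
Your argument is essentially the paper's---both extract the factor $x^{2}$ by shifting the $y$-contour from $\mathbb{R}$ to $\mathbb{R}-i$; you expand the Bessel series first and shift only in the $k=0$ term, while the paper shifts the whole integral and then bounds $J_{2iy+2}(x)\ll x^{2}(1+|y|)^{-3/2}\cosh\pi y$ from its power series, but the mechanism is identical. You are actually more careful than the paper: the shift crosses a simple pole of $1/\cosh\pi y$ at $y=-i/2$, contributing a term of size $\asymp x\,e^{-T^{2-2\theta}}$ which the paper's displayed identity silently omits---harmless in the application (truncating the $c$-sum), as you observe, though it means the bound is not literally uniform down to $x=0$.
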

\begin{proof}
Firstly, note that from \eqref{bes:series} that for $0<x<1$ and $\nu\in \mathbb{C}$ with $\mathrm{Re}(\nu)\geq 0$,
\begin{align*}
\left|\Gamma(\nu+1)J_{\nu}(x)\right| &= \left|(x/2)^\nu \sum_{k=0}^\infty (-1)^k \frac{(x/2)^{2k}}{k!(\nu+1)(\nu+2)\ldots(\nu+k)}\right|\\
&\leq (x/2)^{\mathrm{Re}(\nu)}\sum_{k=0}^\infty (-1)^k \frac{(x/2)^{2k}}{k!k!}\\
&\ll x^{\mathrm{Re}(\nu)}.
\end{align*}
Now from Stirling's formula, assuming that $0\leq \sigma <3$, we obtain
\[
J_{\sigma+2iy} (x) \ll \frac{x^\sigma}{|\Gamma(\sigma+1+2iy)|}\ll x^{\sigma}  (1+|y|)^{-\sigma-1/2} \cosh \pi y.
\]
uniformly in $0<x<1$ and $y\in \mathbb{R}$.

Now consider
\[
F(z):= J_{2iz}(x) \frac{h_{T,G}(z)z}{\cosh \pi z}.
\]
Because of the denominator $\cosh \pi z$, $F(z)$ has simple poles at $z=i\left(k+\frac{1}{2}\right)$ with $k\in \mathbb{Z}$. If $\mathrm{Re}(z)$ is fixed, $|F(z)| \ll_{T,G} \exp (-|\mathrm{Im}(z)|^2/G)$ as $|\mathrm{Im}(z)| \to \infty$. Therefore by shifting the contour from $\gamma_1=(-\infty,+\infty)$ to $\gamma_2=-\frac{7}{16}i+(-\infty,+\infty)$, we see that
\begin{align*}
g(x)&= \int_{\gamma_1} F(z)dz \\
&=\int_{\gamma_2} F(z)dz\\
&=\int_{-\infty}^\infty J_{7/8+2iy}(x) \frac{(-\frac{7}{16}i+y)h_{T,G}(-\frac{7}{16}i+y)}{\cosh \pi (-\frac{7}{16}i+y)}dy\\
&\ll x^{7/8} \int_{-\infty}^\infty |h_{T,G} (-\frac{7}{16}i+y)|dy\\
&\leq x^{7/8}\int_{-\infty}^\infty |\exp(-(-\frac{7}{16}i+y-T)^2/G^2)|+|\exp(-(-\frac{7}{16}i+y+T)^2/G^2)|dy\\
&=2x^{7/8}\int_{-\infty}^\infty |\exp(-(-\frac{7}{16}i+y)^2/G^2)|dy\\
&=2x^{7/8}\int_{-\infty}^\infty \exp((y^2-\frac{7^2}{16^2})/G^2)dy\\
&\ll x^{7/8} G. \qedhere
\end{align*}
\end{proof}
\begin{remark}
In Lemma \ref{lem1} and \ref{lem2}, the fact that $h\left(y\right)$ is a rapidly decreasing function is needed only, but for Lemma \ref{lem3}, analyticity is required.
\end{remark}

\section{Quantitative quantum ergodicity-I}\label{qqe1}
In this section, we prove Theorem \ref{thm} for positive $m$. The proof when $m$ is negative follows identically. For the rest of the section, we assume that $G=T^\theta$ with $1/3<\theta<1$.
\subsection{Reduction via Kuznetsov trace formula}


We first use Kuznetsov trace formula with the test function $h_{T,G}(y)$ from Lemma \ref{kuz} to transform the sum of shifted convolution sums into an oscillating exponential sum.
\begin{lemma}\label{lem:kuz}
Let $X$ be a parameter which may vary with $T$ with the constraint $1 \ll X \ll T$. Let $\psi \in C_0^\infty (0,\infty)$ be a real-valued function. Assume that $m$ is an integer in the range $0<m<X$. Let $I^{diag}$ and $I^{OD}$ be given by
\[
I^{diag} =\sum_{d|m}\sum_{r_1,r_2} \delta_{r_1\left(r_1+d\right),r_2\left(r_2+d\right)} \psi\left(\frac{\pi mr_1}{dX}\right)\psi\left(\frac{\pi mr_2}{dX}\right)
\int_\mathbb{R} \tanh\left(\pi y\right) h_{T,G}\left(y\right) y dy
\]
and
\[
I^{OD}=\sum_{d|m}\sum_{c=1}^\infty \sum_{r_1,r_2}\psi\left(\frac{\pi mr_1}{dX}\right)\psi\left(\frac{\pi mr_2}{dX}\right) \frac{S\left(r_1\left(r_1+d\right),r_2\left(r_2+d\right),c\right)}{c} g\left(\frac{4\pi}{c}\sqrt{r_1\left(r_1+d\right)r_2\left(r_2+d\right)}\right),
\]
where $g$ is defined by \eqref{inttrans}. Then for any fixed $\epsilon>0$, we have
\[
\sum_{|t_\phi-T|<G}\left| \sum_n \rho_\phi\left(n+m\right)\rho_\phi\left(n\right) \psi\left(\frac{\pi n}{X}\right)\right|^2 \ll_\epsilon T^\epsilon\left(|I^{diag}|+|I^{OD}|\right).
\]
\end{lemma}
\begin{proof}
We first use \eqref{triv} so that
\begin{align*}
\sum_{|t_\phi-T|<G}\left| \sum_n \rho_\phi\left(n+m\right)\rho_\phi\left(n\right) \psi\left(\frac{\pi n}{X}\right)\right|^2 &\ll_\epsilon T^\epsilon \sum_{|t_\phi-T|<G}\frac{1}{\rho_\phi(1)^2}\left| \sum_n \rho_\phi\left(n+m\right)\rho_\phi\left(n\right) \psi\left(\frac{\pi n}{X}\right)\right|^2\\
&\ll  T^\epsilon \sum_{\phi}\frac{h_{T,G}\left(t_\phi\right)}{\rho_\phi\left(1\right)^2}\left|\sum_n \rho_\phi\left(n+m\right)\rho_\phi\left(n\right) \psi\left(\frac{\pi n}{X}\right)\right|^2.
\end{align*}
We rearrange the inner sum using the Hecke relation \eqref{heck} and then we apply Cauchy--Schwartz inequality as follows:
\begin{align*}
\sum_{\phi}\frac{h_{T,G}\left(t_\phi\right)}{\rho_\phi\left(1\right)^2}\left|\sum_n \rho_\phi\left(n+m\right)\rho_\phi\left(n\right) \psi\left(\frac{\pi n}{X}\right)\right|^2
&=\sum_{\phi}h_{T,G}\left(t_\phi\right)\left|\sum_{d|m}\sum_n \rho_\phi\left(n\left(n+d\right)\right) \psi\left(\frac{\pi m n}{dX}\right)\right|^2\\
&\leq \tau\left(m\right)\sum_{d|m}\sum_{\phi}h_{T,G}\left(t_\phi\right)\left|\sum_n \rho_\phi\left(n\left(n+d\right)\right) \psi\left(\frac{\pi m n}{dX}\right)\right|^2\\
&\ll_\epsilon  T^\epsilon \sum_{d|m}\sum_{\phi}h_{T,G}\left(t_\phi\right)\left|\sum_n \rho_\phi\left(n\left(n+d\right)\right) \psi\left(\frac{\pi m n}{dX}\right)\right|^2.
\end{align*}
Here we used $\tau(m) \ll_\epsilon m^\epsilon$ (Remark \ref{rem1}) and $m<X \ll T$.

Now we apply Kuznetsov trace formula \eqref{kuz} to obtain the following identity:
\begin{multline*}
\sum_{d|m}\sum_{\phi}h_{T,G}\left(t_\phi\right)\left|\sum_n \rho_\phi\left(n\left(n+d\right)\right) \psi\left(\frac{\pi m n}{dX}\right)\right|^2 \\
+\frac{1}{4\pi}\sum_{d|m} \int_{-\infty}^\infty \left|\sum_n \tau\left(n\left(n+d\right),r\right) \psi\left(\frac{\pi m n}{dX}\right)\right|^2 \frac{h_{T,G}(r)dr}{\cosh \pi r}
\end{multline*}
\begin{align*}
=&\frac{1}{\pi^2}\sum_{d|m}\sum_{r_1,r_2} \delta_{r_1\left(r_1+d\right),r_2\left(r_2+d\right)} \psi\left(\frac{\pi mr_1}{dX}\right)\psi\left(\frac{\pi mr_2}{dX}\right)
\int_\mathbb{R} \tanh\left(\pi y\right) h_{T,G}\left(y\right) y dy\\
+&\frac{2i}{\pi}\sum_{d|m}\sum_{c=1}^\infty \sum_{r_1,r_2}\psi\left(\frac{\pi mr_1}{dX}\right)\psi\left(\frac{\pi mr_2}{dX}\right) \frac{S\left(r_1\left(r_1+d\right),r_2\left(r_2+d\right),c\right)}{c} g\left(\frac{4\pi}{c}\sqrt{r_1\left(r_1+d\right)r_2\left(r_2+d\right)}\right)\\
=&\frac{1}{\pi^2}I^{diag}+\frac{2i}{\pi}I^{OD}.
\end{align*}
Note that we used the fact that $\tau(n,r)$ is real-valued and the assumption that $\psi$ is real.
\end{proof}

To prove Theorem \ref{thm}, we bound the diagonal contribution $I^{diag}$ and the off-diagonal contribution $I^{OD}$ separately.
\begin{lemma}\label{lem:diag}
Assume that $\psi\in C_0^\infty (0,\infty)$ is supported in $(1/l,l)$ for some $l>0$. Then for any $\epsilon>0$, we have
\[
I^{diag} \ll_{l, \epsilon} XGT^{1+\epsilon}\|\psi\|_{L^\infty}^2.
\]
\end{lemma}
\begin{proof}
Recall that $h_{T,G}$ is defined by
\[
h_{T,G}\left(y\right)=h\left(\left(y-T\right)/G\right)+h\left(\left(y+T\right)/G\right)
\]
where $h\left(y\right)=e^{-y^2}$. We therefore bound the integral as follows:
\begin{align*}
&\int_{-\infty}^\infty \tanh\left(\pi y\right) h_{T,G}\left(y\right) y dy \\
=& G\int_{-\infty}^\infty (\tanh(\pi(Gx+T))(Gx+T)+\tanh(\pi(Gx-T))(Gx-T))h(x) dx\\
\leq & 2GT\int_{-\infty}^\infty h(x)dx+2G^2 \int_{-\infty}^\infty h(x)|x|dx\\
\ll &GT.
\end{align*}
To handle the summation over $r_1$ and $r_2$, note that $r_1(r_1+d) = r_2(r_2+d)$ if and only if $r_1=r_2$ or $r_1+r_2+d=0$. Since $\psi$ is assumed to be supported on the positive real line, we infer that the sum is equal to
\begin{align*}
\sum_{d|m} \sum_{r_1} \left|\psi\left(\frac{\pi mr_1}{dX}\right)\right|^2.
\end{align*}
We bound this under the assumption that $\psi$ is supported in a fixed interval $(1/l,l)$ as follows:
\begin{align*}
\sum_{d|m} \sum_{r_1}\left| \psi\left(\frac{\pi mr_1}{dX}\right)\right|^2 &\leq \sum_{d|m} \frac{dlX}{\pi m} \|\psi\|_{L^\infty}^2\\
&< \tau(m) l X \|\psi\|_{L^\infty}^2\\
&\ll_\epsilon l X T^\epsilon \|\psi\|_{L^\infty}^2.
\end{align*}
Here we used $\tau(m) \ll_\epsilon m^\epsilon$ (Remark \ref{rem1}) and $m<X^{\frac{1}{2}}\ll T$. We complete the proof by combining these two estimates.
\end{proof}
To handle the off-diagonal contribution $I^{OD}$, we claim the following.
\begin{lemma}\label{mainlemma}
Fix $l>0$ and $\epsilon>0$. Let $\psi \in C_0^\infty\left(1/l,l\right)$, $1\ll R \ll T$ and $0< d < R^{\frac{1}{2}}$. Then there exists $A>0$ depending only on $\theta$ and $\epsilon$ such that
\begin{multline}\label{exp}
 I^{OD'}:=\sum_{c\geq 1}\sum_{r_1,r_2} \psi\left(\frac{r_1}{R}\right)\psi\left(\frac{r_2}{R}\right)\frac{S\left(r_1\left(r_1+d\right),r_2\left(r_2+d\right),c\right)}{c}  g\left(\frac{4\pi \sqrt{r_1r_2\left(r_1+d\right)\left(r_2+d\right)}}{c}\right)\\
\ll_{\epsilon,l,\theta} d^{3/2}GRT^{1+\epsilon}\|\psi\|_{W^{A,\infty}}^2.
\end{multline}
\end{lemma}
Substituting $\frac{dX}{m}$ by $R=R(d,m,X)$, we find that $R$ and $d$ satisfy $1 \ll R \leq X \ll T$, and $0< d < R^{\frac{1}{2}}$. From this, substituting $\psi(\pi x)$ by $\psi_0(x)$, we bound $I^{OD}$ using Lemma \ref{mainlemma} as follows
\begin{align*}
I^{OD}&=\sum_{d|m}\sum_{c=1}^\infty \sum_{r_1,r_2}\psi\left(\frac{\pi mr_1}{dX}\right)\psi\left(\frac{\pi mr_2}{dX}\right) \frac{S\left(r_1\left(r_1+d\right),r_2\left(r_2+d\right),c\right)}{c} g\left(\frac{4\pi}{c}\sqrt{r_1\left(r_1+d\right)r_2\left(r_2+d\right)}\right)\\
&=\sum_{d|m}\sum_{c=1}^\infty \sum_{r_1,r_2}\psi_0\left(\frac{r_1}{R}\right)\psi_0\left(\frac{r_2}{R}\right) \frac{S\left(r_1\left(r_1+d\right),r_2\left(r_2+d\right),c\right)}{c} g\left(\frac{4\pi}{c}\sqrt{r_1\left(r_1+d\right)r_2\left(r_2+d\right)}\right)\\
&\ll_{l,\theta, \epsilon} \sum_{d|m} d^{3/2}\frac{dX}{m} GT^{1+\epsilon}\|\psi\|_{W^{A,\infty}}^2\\
&\leq \tau(m)m^{3/2}XGT^{1+\epsilon}\|\psi\|_{W^{A,\infty}}^2\\
&\ll m^{3/2}XGT^{1+2\epsilon}\|\psi\|_{W^{A,\infty}}^2,
\end{align*}
where we used Remark \ref{rem1}.  We therefore conclude that Lemma \ref{lem:kuz}, \ref{lem:diag}, and \ref{mainlemma} implies Theorem \ref{thm}. For the rest of the section, we prove Lemma \ref{mainlemma}.




\subsection{Off-diagonal contribution}
\subsubsection{Estimating the tail and further reduction}
We begin by estimating the tail of the sum over $c$ in $I^{OD'}$.
\begin{lemma}\label{tail}
For any fixed $\epsilon>0$, we have
\begin{multline*}
I_{tail}^{OD'}=\sum_{c\geq R^2G^{-1}T^{-1+\epsilon}}\sum_{r_1,r_2} \psi\left(\frac{r_1}{R}\right)\psi\left(\frac{r_2}{R}\right)\frac{S\left(r_1\left(r_1+d\right),r_2\left(r_2+d\right),c\right)}{c}  g\left(\frac{4\pi \sqrt{r_1r_2\left(r_1+d\right)\left(r_2+d\right)}}{c}\right)\\
=O_{\epsilon,l} \left(T^{-100}\|\psi\|_{L^\infty}^2\right).
\end{multline*}
\end{lemma}
\begin{proof}
Since the support of $\psi$ is contained in $(1/l,l)$, we have $R/l<r_1, r_2 < lR$. Therefore for $c>100l^2R^2$, we have from Lemma \ref{lem3} that
\[
g\left(\frac{4\pi \sqrt{r_1r_2\left(r_1+d\right)\left(r_2+d\right)}}{c}\right) \ll G\frac{l^{7/4}R^{7/4}}{c^{7/8}}.
\]
Also, Lemma \ref{weilb} implies that
\[
S\left(r_1\left(r_1+d\right),r_2\left(r_2+d\right),c\right) \ll lRc^{5/8}
\]
where we used $\tau(c) \ll c^{1/8}$ and $\gcd(n,m,c) \leq n$. Hence for any parameter $Y>100l^2R^2$, we have
\begin{align*}
\sum_{c\geq Y}\sum_{r_1,r_2} & \psi\left(\frac{r_1}{R}\right)\psi\left(\frac{r_2}{R}\right)\frac{S\left(r_1\left(r_1+d\right),r_2\left(r_2+d\right),c\right)}{c}  g\left(\frac{4\pi \sqrt{r_1r_2\left(r_1+d\right)\left(r_2+d\right)}}{c}\right)\\
&\ll \sum_{c\geq Y}\sum_{r_1,r_2} \psi\left(\frac{r_1}{R}\right)\psi\left(\frac{r_2}{R}\right) \frac{Gl^{11/4}R^{11/4}}{c^{5/4}}\\
&\leq \sum_{c\geq Y} \frac{Gl^{11/4}R^{11/4}}{c^{5/4}} \|\psi\|_{L^\infty}^2\\
&\ll Gl^{11/4}R^{11/4}Y^{-1/4} \|\psi\|_{L^\infty}^2.
\end{align*}
Now by Lemma \ref{lem1}, for any $A>0$, we have
\begin{align*}
\sum_{R^2G^{-1}T^{-1+\epsilon}\leq c< Y}\sum_{r_1,r_2} & \psi\left(\frac{r_1}{R}\right)\psi\left(\frac{r_2}{R}\right)\frac{S\left(r_1\left(r_1+d\right),r_2\left(r_2+d\right),c\right)}{c}  g\left(\frac{4\pi \sqrt{r_1r_2\left(r_1+d\right)\left(r_2+d\right)}}{c}\right)\\
&\ll_{A,\epsilon,l} \sum_{R^2G^{-1}T^{-1+\epsilon}\leq c< Y}\sum_{r_1,r_2} \psi\left(\frac{r_1}{R}\right)\psi\left(\frac{r_2}{R}\right) lRT^{-A}\\
&\leq Yl^3 R^3  T^{-A}\|\psi\|_{L^\infty}^2,
\end{align*}
for all sufficiently large $T$. We complete the proof by choosing $Y=T^{800}$ and $A=1600$.
\end{proof}



From this lemma, we may assume that $GT^{1-\epsilon_1} < R^2$ with some fixed constant $\epsilon_1$ that satisfies
\[
\min\left\{\frac{\theta}{2},\frac{3\theta-1}{6}\right\}>\epsilon_1>0.
\]
Observe that $g\left(x\right)$ is the imaginary part of $2\tilde{g}\left(x\right)$, that $\psi$ is real valued, and that the Kloosterman sums are real. Therefore
\begin{multline*}
I_{main}^{OD'}=\left|\sum_{c< R^2G^{-1}T^{-1+\epsilon_1}}\sum_{r_1,r_2} \psi\left(\frac{r_1}{R}\right)\psi\left(\frac{r_2}{R}\right)\frac{S\left(r_1\left(r_1+d\right),r_2\left(r_2+d\right),c\right)}{c}  g\left(\frac{4\pi \sqrt{r_1r_2\left(r_1+d\right)\left(r_2+d\right)}}{c}\right)\right|\\
\leq \left|\sum_{c< R^2G^{-1}T^{-1+\epsilon_1}}\sum_{r_1,r_2} \psi\left(\frac{r_1}{R}\right)\psi\left(\frac{r_2}{R}\right)\frac{S\left(r_1\left(r_1+d\right),r_2\left(r_2+d\right),c\right)}{c}  2\tilde{g}\left(\frac{4\pi \sqrt{r_1r_2\left(r_1+d\right)\left(r_2+d\right)}}{c}\right)\right|.
\end{multline*}

From Lemma \ref{lem2} with $A=200$ and sufficiently large $N>0$, we see that
\[
\sum_{c< R^2G^{-1}T^{-1+\epsilon_1}}\sum_{r_1,r_2} \psi\left(\frac{r_1}{R}\right)\psi\left(\frac{r_2}{R}\right)\frac{S\left(r_1\left(r_1+d\right),r_2\left(r_2+d\right),c\right)}{c}  \tilde{g}\left(\frac{4\pi \sqrt{r_1r_2\left(r_1+d\right)\left(r_2+d\right)}}{c}\right)
\]
is a linear combination of
\begin{multline*}
\sum_{c< R^2G^{-1}T^{-1+\epsilon_1}}\sum_{r_1,r_2} \psi\left(\frac{r_1}{R}\right)\psi\left(\frac{r_2}{R}\right)\frac{S\left(r_1\left(r_1+d\right),r_2\left(r_2+d\right),c\right)}{c}  \\
\times \int_{|y-T|<T/2} g_{k,\kappa,N}\left(y,\frac{4\pi \sqrt{r_1r_2\left(r_1+d\right)\left(r_2+d\right)}}{c}\right)yh_{T,G}(y)dy\tag{$k=0,1,\ldots, N$},
\end{multline*}
plus an error term that contributes at most $O_l(T^{-100})$.
\begin{remark}
Since we are assuming that $\theta>1/3$, we can take, for instance, $N=900$.
\end{remark}
Therefore it is sufficient to prove that
\begin{lemma}\label{mainlem}
Assume that $|y-T|<T/2$ and that $0<d<R^{\frac{1}{2}}$. Then for any given sufficiently small $\epsilon_0>0$ and $A>\max\{100/(3\theta-1), 300/\epsilon_0\}$, we have
\begin{multline}\label{asdf}
J_{main}:=\sum_{c< R^2G^{-1}T^{-1+\epsilon_1}}\sum_{r_1,r_2} \psi\left(\frac{r_1}{R}\right)\psi\left(\frac{r_2}{R}\right)\frac{S\left(r_1\left(r_1+d\right),r_2\left(r_2+d\right),c\right)}{c}\\
\times g_{k,\kappa,N}\left(y,\frac{4\pi \sqrt{r_1r_2\left(r_1+d\right)\left(r_2+d\right)}}{c}\right)
 \ll_{l, \theta, \epsilon_0} d^{3/2} RT^{\epsilon_0} \|\psi\|_{W^{A,\infty}}.
\end{multline}
\end{lemma}
Indeed, if we know \eqref{asdf}, then
\begin{multline*}
\sum_{c< R^2G^{-1}T^{-1+\epsilon_1}}\sum_{r_1,r_2} \psi\left(\frac{r_1}{R}\right)\psi\left(\frac{r_2}{R}\right)\frac{S\left(r_1\left(r_1+d\right),r_2\left(r_2+d\right),c\right)}{c}  \\
\times \int_{|y-T|<T/2} g_{k,\kappa,N}\left(y,\frac{4\pi \sqrt{r_1r_2\left(r_1+d\right)\left(r_2+d\right)}}{c}\right)yh_{T,G}(y)dy\ll_{l,\theta,\epsilon_0} d^{3/2} RGT^{1+\epsilon_0} \|\psi\|_{W^{A,\infty}},
\end{multline*}
and therefore $I_{main}^{OD'} \ll_{l,\theta,\epsilon_0} d^{3/2} RGT^{1+\epsilon_0} \|\psi\|_{W^{A,\infty}}$. Combined with the bound
\[
I_{tail}^{OD'}= O_{\epsilon_1,l} \left(T^{-100}\|\psi\|_{L^\infty}^2\right)
\]
from Lemma \ref{tail}, and $|I^{OD'}|\leq |I_{main}^{OD'}|+|I_{tail}^{OD'}|$, we obtain Lemma \ref{mainlemma}.
\subsubsection{Splitting oscillating factors}
To prove Lemma \ref{mainlem}, we investigate the cancellation coming from the sum over $r_1$ and $r_2$ in \eqref{asdf}, as done in \cite{sarluo}. To this end, we first want to represent
\[
\psi\left(\frac{r_1}{R}\right)\psi\left(\frac{r_2}{R}\right)g_{k,\kappa,N}\left(y,\frac{4\pi \sqrt{r_1r_2\left(r_1+d\right)\left(r_2+d\right)}}{c}\right)
\]
as a product of the main oscillating factor times something that oscillates mildly. For $c<R^2G^{-1}T^{-1+\epsilon_1}$, $r_1, r_2 \sim R$, we have
\[
\frac{4\pi \sqrt{r_1r_2\left(r_1+d\right)\left(r_2+d\right)}}{c} \gg GT^{1-\epsilon_1}.
\]
Note that, when $x \gg GT^{1-\epsilon_1}$ and $y \sim T$, the main oscillating factor of
\[
g_{k,\kappa,N}\left(y,x\right)=y^{2\kappa}\left(4y^2+x^2\right)^{-k/2-\kappa-1/4}\exp\left(ix+i\sum_{m=1}^{N-1}c_m\frac{y^{2m}}{x^{2m-1}}\right)
\]
in $x$ aspect is $\exp (ix)$. Now for $r_1\sim R$, $r_2 \sim R$, and $d <R^{\frac{1}{2}}$, we have
\begin{align*}
\exp\left(\frac{4\pi i\sqrt{r_1r_2\left(r_1+d\right)\left(r_2+d\right)}}{c}\right)&=\exp\left(\frac{4\pi ir_1r_2\sqrt{\left(1+d/r_1\right)\left(1+d/r_2\right)}}{c}\right)\\
&=\exp\left(\frac{4\pi ir_1r_2(1+\frac{d}{2r_1}+O(d^2/R^2))(1+\frac{d}{2r_2}+O(d^2/R^2))}{c}\right)\\
&=\exp\left(\frac{2\pi i(2r_1r_2+dr_1+dr_2+O(d^2))}{c}\right),
\end{align*}
and from this calculation, we infer that the main oscillating factor of $\exp\left(4\pi i\sqrt{r_1r_2\left(r_1+d\right)\left(r_2+d\right)}/c\right)$ is given by $e_c\left(2r_1r_2+dr_1+dr_2\right)$.

Motivated from these observations, we define $f_c\left(r_1,r_2\right)$ by the following equation
\[
e_c\left(2r_1r_2+dr_1+dr_2\right)f_c\left(r_1,r_2\right)
=\psi \left(\frac{r_1}{R}\right)\psi \left(\frac{r_2}{R}\right)g_{k,\kappa,N}\left(y,\frac{4\pi \sqrt{r_1r_2\left(r_1+d\right)\left(r_2+d\right)}}{c}\right).
\]
Now, for each fixed $c$, we rearrange the sum in \eqref{asdf} modulo $c$ as follows
\begin{align*}
cJ_{main}(c)=&\sum_{r_1,r_2} S\left(r_1\left(r_1+d\right),r_2\left(r_2+d\right),c\right) e_c\left(2r_1r_2+dr_1+dr_2\right)f_c\left(r_1,r_2\right)\\
=& \sum_{a,b\pmod{c}}S\left(a\left(a+d\right),b\left(b+d\right),c\right)e_c\left(2ab+da+db\right) \sum_{\substack{r_1 \equiv a\pmod{c} \\ r_2 \equiv b\pmod{c} }}f_c\left(r_1,r_2\right)\\
=&\frac{1}{c^2} \sum_{u\pmod{c}} \sum_{v\pmod{c}}\left(\sum_{a,b\pmod{c}} S\left(a\left(a+d\right),b\left(b+d\right),c\right)e_c\left(2ab+\left(d+u\right)a+\left(d+v\right)b\right) \right)\\
&\times \sum_{r_1,r_2}f_c\left(r_1,r_2\right)e_c\left(-ur_1-vr_2\right).
\end{align*}
(Note that $\sum_{c<R^2G^{-1}T^{-1+\epsilon_1}}J_{main}(c) = J_{main}$ in \eqref{asdf}.) Assume without loss of generality that $|u|,|v| \leq \frac{c}{2}$. As we expect $f_c\left(r_1,r_2\right)$ is mildly oscillating, the sum
\[
\sum_{r_1,r_2}f_c\left(r_1,r_2\right)e_c\left(-ur_1-vr_2\right)
\]
is going to be negligible, unless both $u$ and $v$ are relatively smaller than $c$. We quantify this using the Poisson summation formula and prove the following estimate in \S\ref{section212}.
\begin{lemma}\label{lem:poisson}
Assume that $|y-T|<T/2$, and $0<d<R^{\frac{1}{2}}$. Fix a constant $\epsilon_2>0$, and assume that $dR^{\epsilon_2}<R^2G^{-1}T^{-1+\epsilon_1}$, and that $dR^{\epsilon_2}<c<R^2G^{-1}T^{-1+\epsilon_1}$. Let $A>0$ be a fixed positive integer such that $A>\max\{100/(3\theta-1),100/\epsilon_2\}$. Then we have
\[
\sum_{r_1,r_2}f_c\left(r_1,r_2\right)e_c\left(-ur_1-vr_2\right) = O_{l, \theta, \epsilon_2} \left(c^{-1/2} R^3  T^{-2} \log T \|\psi\|_{W^{A,\infty}}^2\right).
\]
Also, there exists a constant $\eta>0$ depending only on $l$ such that if either $u$ or $v$ is not in the range
\[
\lbrack \eta^{-1}\frac{c^2T^2}{2\pi R^3},\eta\frac{c^2T^2}{2\pi R^3}\rbrack
\]
then
\[
\sum_{r_1,r_2}f_c\left(r_1,r_2\right)e_c\left(-ur_1-vr_2\right)= O_{l, \theta, \epsilon_2}\left( T^{-20}\|\psi\|_{W^{A,\infty}\left(0,\infty\right)}^2\right).
\]
\end{lemma}
For the sums of Kloosterman sums, we know the following estimate from \cite{sarluo}.
\begin{lemma}\label{lem:kloos}
Let $c=c_1c_2$ with $\left(2,c_1\right)=1$ and $c_2|2^\infty$. Then for any fixed $\epsilon>0$, we have that
\begin{multline*}
\sum_{a,b\pmod{c}} S\left(a\left(a+d\right),b\left(b+d\right),c\right)e_c\left(2ab+\left(d+u\right)a+\left(d+v\right)b\right)\\
= \left\{
\begin{array}{cl}O_\epsilon\left(\left(v,c_1\right)c_1^{3/2}c_2^{5/2+\epsilon}\right) &\hspace{10mm} \text{ if }\left(u,c_1\right)=\left(v,c_1\right),\\0 & \hspace{10mm}\text{ otherwise }
\end{array} \right.
\end{multline*}
\end{lemma}
\begin{remark}
In \cite{sarluo}, the first condition is given by $(v,c_1) | u$ instead of $(v,c_1)=(u,c_1)$. These two conditions are in fact the same, because the sum vanishes unless both $(v,c_1)|u$ and $(u,c_1)|v$  hold, by the symmetry.
\end{remark}
\subsubsection{Completion of proof of Lemma \ref{mainlem}}
Before we prove Lemma \ref{lem:poisson}, we complete the proof of Lemma \ref{mainlem} using Lemma \ref{lem:poisson} and Lemma \ref{lem:kloos}.
\begin{lemma}
Fix $\epsilon>0$ and $\epsilon_2>0$. For $dR^{\epsilon_2}<c<R^2G^{-1}T^{-1+\epsilon_1}$ and $|y-T|<T/2$, we have
\[
J_{main}(c) \ll_{l,\theta, \epsilon_2,\epsilon}  c_1^{2}c_2^{3}R^{-3}T^{2+\epsilon} \|\psi\|_{W^{A,\infty}}^2
\]
for any $A>\max\{100/(3\theta-1),100/\epsilon_2\}$, where $c=c_1c_2$ with $\left(2,c_1\right)=1$ and $c_2|2^\infty$.
\end{lemma}
\begin{proof}
As a direct consequence of Lemma \ref{lem:poisson} and Lemma \ref{lem:kloos}, we have
\[
J_{main}(c) \ll_{l,\theta, \epsilon_2,\epsilon} c^{-3}\sum_{\substack{u,v \in \lbrack \eta^{-1}\frac{c^2T^2}{2\pi R^3},\eta\frac{c^2T^2}{2\pi R^3}\rbrack\\ \left(u,c_1\right)=\left(v,c_1\right)}} \left(v,c_1\right)c_1^{3/2}c_2^{5/2+\epsilon}c^{-1/2}R^3T^{-2}\log T \|\psi\|_{W^{A,\infty}}^2.
\]
Note that, for any given $M>0$, we have
\begin{align*}
\sum_{\substack{u,v < M \\ \left(u,c_1\right)=\left(v,c_1\right)}} (v,c_1) &= \sum_{\substack{d|c_1\\d<M}} d \sum_{\substack{u,v<M\\(u,c_1)=(v,c_1)=d}}1\\
&=\sum_{\substack{d|c_1\\d<M}} d \sum_{\substack{u,v<M/d\\(u,c_1/d)=(v,c_1/d)=1}}1\\
&\leq \sum_{\substack{d|c_1\\d<M}} d \sum_{\substack{u,v<M/d}}1\\
&\leq \sum_{\substack{d|c_1\\d<M}} M^2/d \\
&\leq \sum_{d<M}M^2/d \ll_\epsilon M^{2+\epsilon},
\end{align*}
hence
\[
J_{main}(c) \ll_{l,\theta, \epsilon_2,\epsilon} c_1^{3/2}c_2^{5/2+\epsilon}c^{1/2}R^{-3}T^{2}\log T \|\psi\|_{W^{A,\infty}}^2\ll_\epsilon c_1^{2}c_2^{3}R^{-3}T^{2+2\epsilon} \|\psi\|_{W^{A,\infty}}^2.\qedhere
\]
\end{proof}
For $c\leq dR^{\epsilon_2}$, we use Lemma \ref{weilb}, Remark \ref{rem1}, and Lemma \ref{lem:non}, so that
\begin{align*}
J_{main}(c) &\ll_l \sum_{r_1,r_2 \in (R/l,lR)} \frac{ (r_1(r_1+d),r_2(r_2+d),c)^{1/2}c^{1/2} \tau(c)}{c} c^{1/2}R^{-1}\|\psi\|_{L^\infty}^2\\
&\ll_{l,\epsilon} c^{1/2}RT^{\epsilon}\|\psi\|_{L^\infty}^2.
\end{align*}
We therefore have
\begin{align*}
J_{main}&= \sum_{c\leq dR^{\epsilon_2}} J_{main}(c)+\sum_{dR^{\epsilon_2}<c< R^2 G^{-1}T^{-1+\epsilon_1}} J_{main}(c)\\
&= O_{l,\epsilon} \left(d^{3/2}R^{1+3\epsilon_2/2}T^{\epsilon}\|\psi\|_{L^\infty}^2\right)+ O_{l,\theta,\epsilon_2,\epsilon}\left(R^{-3}T^{2+\epsilon} \|\psi\|_{W^{A,\infty}}^2\sum_{c< R^2 G^{-1}T^{-1+\epsilon_1}}c_1^{2}c_2^{3}\right),
\end{align*}
and
\begin{align*}
\sum_{c< R^2 G^{-1}T^{-1+\epsilon_1}}c_1^{2}c_2^{3}&<\sum_{\substack{c_2< R^2G^{-1}T^{-1+\epsilon_1}\\c_2|2^\infty}} c_2^3 \sum_{c_1< R^2G^{-1}T^{-1+\epsilon_1}/c_2}c_1^2\\
&\ll \sum_{\substack{c_2< R^2G^{-1}T^{-1+\epsilon_1}\\c_2|2^\infty}} R^6G^{-3}T^{-3+3\epsilon_1}\\
&\ll_\epsilon R^6G^{-3}T^{-3+3\epsilon_1+\epsilon}\\
&=R^6T^{-3+3\epsilon_1-3\theta+\epsilon}\\
&<R^6T^{-4-(3\theta-1)/2+\epsilon}\\
&\ll R^4T^{-2+\epsilon}
\end{align*}
where we used the assumption that $0<\epsilon_1 <(3\theta-1)/6$ and $R\ll T$. Combining these two estimates, we conclude that
\[
J_{main} \ll_{l,\theta,\epsilon_2, \epsilon} d^{3/2}RT^{\epsilon+\frac{3}{2}\epsilon_2}\|\psi\|_{W^{A,\infty}}^2.
\]
To obtain Lemma \ref{mainlem}, we choose $\epsilon_2 =  \epsilon_0 /3$ and $\epsilon=\epsilon_0/2$.
\subsection{Proof of Lemma \ref{lem:poisson}}\label{section212}
\subsubsection{Preliminary estimates}\label{section211}
Let
\begin{align*}
\Delta\left(r_1,r_2\right)&=\sqrt{r_1r_2\left(r_1+d\right)\left(r_2+d\right)}\\
\alpha\left(x\right)&=\sum_{m=1}^{N-1}c_m\frac{y^{2m}}{x^{2m-1}}\\
\varphi\left(r_1,r_2\right)&=\alpha\left(\frac{4\pi\Delta}{c}\right)+\frac{4\pi}{c}\left(\Delta-r_1r_2-\frac{dr_1}{2}-\frac{dr_2}{2}\right)
\end{align*}
and let
\[
g_c\left(r_1,r_2\right)=\psi\left(\frac{r_1}{R}\right)\psi\left(\frac{r_2}{R}\right)y^{2\kappa}\left(4y^2+\frac{16\pi^2r_1r_2\left(r_1+d\right)\left(r_2+d\right)}{c^2}\right)^{-k/2-\kappa-1/4}.
\]
Then, in these notations, we have
\[
f_c\left(r_1,r_2\right)=g_c\left(r_1,r_2\right) \exp\left(i\varphi\left(r_1,r_2\right)\right).
\]
Before we give a proof for Lemma \ref{lem:poisson}, we collect some estimates of derivatives of $f_c$, $g_c$, and $\varphi$.

\begin{lemma}\label{lem:non}
Assume $|y-T|<T/2$, $0<d<R^{\frac{1}{2}}$, $R^2>GT^{1-\epsilon_1}$, and $c<R^2G^{-1}T^{-1+\epsilon_1}$. For any nonnegative integers $k_1, k_2 \geq 0$, we have that
\begin{equation}\label{int2}
\frac{\partial^{k_1+k_2}g_c}{\partial {r_1}^{k_1}\partial {r_2}^{k_2}} \ll_{l,k_1,k_2} c^{1/2}R^{-1-k_1-k_2}\|\psi\|_{W^{k_1+k_2,\infty}(0,\infty)}^2.
\end{equation}
\end{lemma}
\begin{proof}
We may first assume without loss of generality that $r_1, r_2 \in ( R/l,lR)$, since $\psi$ is supported in $(1/l, l)$. By reexpressing $g_c$ as follows,
\[
g_c\left(r_1,r_2\right)=y^{2\kappa}2^{-2k-4\kappa-1}\pi^{-k-2\kappa-1/2}c^{k+2\kappa+\frac{1}{2}}\psi\left(\frac{r_1}{R}\right)\psi\left(\frac{r_2}{R}\right)\left(\frac{c^2y^2}{4\pi^2}+r_1r_2\left(r_1+d\right)\left(r_2+d\right)\right)^{-k/2-\kappa-1/4}.
\]
we see that
\[
\frac{\partial^{k_1+k_2}g_c}{\partial {r_1}^{k_1}\partial {r_2}^{k_2}}
\]
is a linear combination of
\begin{multline*}
y^{2\kappa}c^{k+2\kappa+\frac{1}{2}}R^{-n_1-n_2} \psi^{(n_1)}\left(\frac{r_1}{R}\right)\psi^{(n_2)}\left(\frac{r_2}{R}\right)\\
\times \left(\frac{c^2y^2}{4\pi^2}+r_1r_2\left(r_1+d\right)\left(r_2+d\right)\right)^{-k/2-\kappa-1/4-m}
\prod_{j=1}^m \frac{\partial^{a_j+b_j}}{\partial r_1^{a_j}\partial r_2^{b_j}}r_1r_2\left(r_1+d\right)\left(r_2+d\right)
\end{multline*}
where $n_1+\sum_{j=1}^m a_j = k_1$, $n_2+\sum_{j=1}^m b_j = k_2$, and $ 0\leq m \leq k_1+k_2-n_1-n_2$. We bound each term in product as follows:
\begin{align*}
|\psi^{(n_1)}\left(\frac{r_1}{R}\right)\psi^{(n_2)}\left(\frac{r_2}{R}\right) |&<  \|\psi\|_{W^{k_1+k_2,\infty}(0,\infty)}^2\\
\left(\frac{c^2y^2}{4\pi^2}+r_1r_2\left(r_1+d\right)\left(r_2+d\right)\right)^{-k/2-\kappa-1/4-m} &< \left(r_1r_2\left(r_1+d\right)\left(r_2+d\right)\right)^{-k/2-\kappa-1/4-m} \\
&\ll_l R^{-2k-4\kappa-1-4m}\\
\prod_{j=1}^m \frac{\partial^{a_j+b_j}}{\partial r_1^{a_j}\partial r_2^{b_j}}r_1r_2\left(r_1+d\right)\left(r_2+d\right)&\ll_l \prod_{j=1}^mR^{4- a_j-b_j}\\
&= R^{4m-k_1+n_1-k_2+n_2}.
\end{align*}
Combining these estimates, we infer that
\[
\frac{\partial^{k_1+k_2}g_c}{\partial {r_1}^{k_1}\partial {r_2}^{k_2}}\ll_{l,k_1,k_2} T^{2\kappa}c^{k+2\kappa+ 1/2}R^{-2k-4\kappa-1-k_1-k_2}\|\psi\|_{W^{k_1+k_2,\infty}(0,\infty)}^2.
\]
Because $c < R^2G^{-1}T^{-1+\epsilon_1}$, we have
\[
T^{2\kappa}c^{k+2\kappa}R^{-2k-4\kappa} < T^{2\kappa} G^{-(k+2\kappa)}T^{-k-2\kappa + \epsilon_1(k+2\kappa)}=T^{-k}T^{-(\theta-\epsilon_1)(k+2\kappa)} \leq 1,
\]
and so we obtain \eqref{int2}.
\end{proof}
\begin{lemma}\label{lem:phase}
Assume $r_1, r_2 \in ( R/l,lR )$, $|y-T|<T/2$, and $0<d<R^{\frac{1}{2}}$. Fix a constant $\epsilon_2>0$, and assume that $dR^{\epsilon_2}<R^2G^{-1}T^{-1+\epsilon_1}$, and that $dR^{\epsilon_2}<c<R^2G^{-1}T^{-1+\epsilon_1}$. Then for any nonnegative integers $k_1, k_2 \geq 0$, we have
\begin{equation}\label{int1}
\frac{\partial^{k_1+k_2}\varphi}{\partial {r_1}^{k_1}\partial {r_2}^{k_2}}\ll_{l, k_1,k_2} cT^2R^{-2-k_1-k_2}.
\end{equation}
Moreover, when $k_1+k_2 =1$ or $2$, we have
\begin{equation}\label{stat}
\varphi_{r_1} \sim_l \frac{cT^2}{R^3}, \varphi_{r_2} \sim_l \frac{cT^2}{R^3},
\end{equation}
\begin{equation}\label{end}
|\varphi_{r_ir_j}| \sim_l \frac{cT^2}{R^4} ~\text{ and }~ |\varphi_{r_1r_1}\varphi_{r_2r_2}-\varphi_{r_1r_2}^2| \gg_l \frac{c^2T^4}{R^8}.
\end{equation}
\end{lemma}
\begin{proof}
Consider the power series expansion
\begin{align*}
\Delta = r_1r_2\sqrt{1+\frac{d}{r_1}}\sqrt{1+\frac{d}{r_2}} &= r_1r_2\sum_{n_1,n_2=0}^\infty \binom{1/2}{n_1} \binom{1/2}{n_2} \left(\frac{d}{r_1}\right)^{n_1}\left(\frac{d}{r_2}\right)^{n_2}\\
&=\sum_{n_1,n_2=0}^\infty \binom{1/2}{n_1} \binom{1/2}{n_2} \frac{d^{n_1+n_2}}{r_1^{n_1-1}r_2^{n_2-1}},
\end{align*}
where $\binom{z}{n} := \frac{z(z-1) \ldots (z-n+1)}{n!}$. For $r_1, r_2 \in \lbrack R/l,lR\rbrack$, we therefore have
\begin{align*}
\Delta&=r_1r_2+O_l\left(dR\right)\\
\Delta_{r_i}&=r_{3-i}+\frac{d}{2}+O_l\left(d^2R^{-1}\right)\\
\Delta_{r_ir_i}&=O_l\left(d^2R^{-2}\right)\\
\Delta_{r_1r_2}&=1+O_l\left(d^2R^{-2}\right)\\
\frac{\partial^{k_1+k_2}\Delta}{\partial {r_1}^{k_1}\partial {r_2}^{k_2}}&=O_{l,k_1,k_2}\left(d^2R^{-k_1-k_2}\right) \tag{$k_1+k_2 \geq 3$}\\
\frac{\partial^{k_1+k_2}}{\partial {r_1}^{k_1}\partial {r_2}^{k_2}}(\Delta-r_1r_2-&\frac{dr_1}{2}-\frac{dr_2}{2}) =O_{l,k_1,k_2}\left(d^2R^{-k_1-k_2}\right) \tag{$k_1+k_2 \geq 0$}.
\end{align*}
For $x\gg GT^{1-\epsilon_1} \gg T^{1+\frac{\theta}{2}}$ and $T/2<y<3T/2$, we have
\[
\frac{\partial^m\alpha}{\partial x^m}(x) =O_m\left(T^2x^{-1-m}\right) \tag{$m \geq 0$}.
\]
Now note that $\frac{\partial^{k_1+k_2}}{\partial {r_1}^{k_1}\partial {r_2}^{k_2}} \alpha\left(\frac{4\pi \Delta}{c}\right)$ is a linear combination of terms of the form
\[
\alpha^{(m)}\left(\frac{4\pi \Delta}{c}\right) \prod_{j=1}^m \frac{\partial^{a_j+b_j}(\Delta/c)}{\partial r_1^{a_j} \partial r_2^{b_j}}
\]
with $1\leq m\leq k_1+k_2$, $\sum_{j=1}^m a_j=k_1$, and $\sum_{j=1}^m b_j =k_2$. Let $m_1$ and $m_2$ be the number of $j$ such that $a_j+b_j=1$ and $a_j+b_j=2$, respectively. Then
\begin{align*}
\alpha^{(m)}\left(\frac{4\pi \Delta}{c}\right) \prod_{j=1}^m \frac{\partial^{a_j+b_j}(\Delta/c)}{\partial r_1^{a_j} \partial r_2^{b_j}}
&\ll_{l,k_1,k_2} T^2 \left(\frac{\Delta}{c}\right)^{-1-m}c^{-m} R^{m_1} \frac{d^{2(m-m_1-m_2)}}{R^{k_1+k_2-m_1-2m_2}}\\
&\ll_l cT^2R^{-2-2m+m_1}  \frac{d^{2(m-m_1-m_2)}}{R^{k_1+k_2-m_1-2m_2}}\\
&=cT^2 R^{-2 - k_1-k_2}\left(\frac{d}{R}\right)^{2(m-m_1-m_2)}\\
&<cT^2 R^{-2 - k_1-k_2}.
\end{align*}
From this, we obtain \eqref{int1},
\begin{equation*}
\frac{\partial^{k_1+k_2}\varphi}{\partial {r_1}^{k_1}\partial {r_2}^{k_2}}= O_{l,k_1,k_2}( cT^2R^{-2-k_1-k_2})+O_{l,k_1,k_2}(c^{-1}d^2R^{-k_1-k_2})=O_{l,k_1,k_2}( cT^2R^{-2-k_1-k_2}),
\end{equation*}
where we used the assumption that $dR^{\epsilon_2}<c$.

Now to prove \eqref{stat} and \eqref{end}, we first compute
\begin{align*}
\alpha_x&=-c_1\frac{y^2}{x^2}+O\left(T^4x^{-4}\right)\\
\alpha_{xx}&=2c_1\frac{y^2}{x^3}+O\left(T^4x^{-5}\right).
\end{align*}
Then we have
\begin{align*}
\varphi_{r_i} &=\frac{4\pi\alpha_x\left(4\pi\Delta/c\right)}{c}\Delta_{r_i} +O_l(c^{-1}d^2 R^{-1})\\
&=\frac{- c_1 c y^2}{4\pi \Delta^2} \Delta_{r_i} + O\left(\frac{T^4 c^3 \Delta_{r_i}}{\Delta^4}\right)+O_l(c^{-1}d^2 R^{-1})\\
&=\frac{  c y^2}{2\pi \Delta^2} \Delta_{r_i} + O_l\left(T^4 c^3 R^{-7}\right) + O_l(c^{-1}d^2R^{-1}).
\end{align*}
The leading term is $\sim cT^2R^{-3}$, and the error term is $o(cT^2R^{-3})$ since we assumed that $ dR^{\epsilon_2}<c<R^2G^{-1}T^{-1+\epsilon_1}$. From this, we conclude \eqref{stat}. Likewise, we obtain \eqref{end} from the following computation
\begin{align*}
\varphi_{r_ir_j}&=\frac{16 \pi^2 \alpha_{xx}\left(4\pi \Delta/c\right)}{c^2}\Delta_{r_i}\Delta_{r_j}+\frac{4 \pi \alpha_{x}\left(4\pi \Delta/c\right)}{c}\Delta_{r_ir_j}+O_l\left(\frac{d^2}{cR^2}\right)\\
&=\frac{8 \pi^2 \alpha_{xx}\left(4\pi \Delta/c\right)}{c^2}\left(2\Delta_{r_i}\Delta_{r_j}-\Delta\Delta_{r_ir_j}\right)+O_l\left(\frac{\Delta_{r_ir_j}T^4c^3}{R^8}+\frac{c}{R^{2+2\epsilon_2}}\right)\\
&=\frac{8 \pi^2 \alpha_{xx}\left(4\pi \Delta/c\right)}{c^2}\left(\left(2-\delta_{i,j}\right)r_{3-i}r_{3-j}+O_l\left(dR\right)\right)+O_l\left(\frac{T^4c^3}{R^8}+\frac{c}{R^{2+2\epsilon_2}}\right).\qedhere
\end{align*}
\end{proof}

\begin{lemma}\label{lem:po}
Assume that $|y-T|<T/2$, and $0<d<R^{\frac{1}{2}}$. Fix a constant $\epsilon_2>0$, and assume that $dR^{\epsilon_2}<R^2G^{-1}T^{-1+\epsilon_1}$, and that $dR^{\epsilon_2}<c<R^2G^{-1}T^{-1+\epsilon_1}$. Then for any nonnegative integers $k_1, k_2 \geq 0$, we have
\begin{equation*}
\frac{\partial^{k_1+k_2}f_c}{\partial {r_1}^{k_1}\partial {r_2}^{k_2}}\ll_{l,k_1,k_2} \frac{c^{1/2}}{R}T^{-(k_1+k_2)\left(3\theta-1\right)/4}\|\psi\|_{W^{k_1+k_2,\infty}}^2.
\end{equation*}
\end{lemma}
\begin{proof}
Recall that $f_c= g_c \exp (i\varphi)$. Hence
\[
\frac{\partial^{k_1+k_2}f_c}{\partial {r_1}^{k_1}\partial {r_2}^{k_2}}
\]
is a linear combination of
\[
\frac{\partial^{n_1+n_2}g_c}{\partial {r_1}^{n_1}\partial {r_2}^{n_2}}(r_1,r_2) e^{i\varphi(r_1,r_2)} \prod_{j=1}^m \frac{\partial^{a_j+b_j}\varphi}{\partial r_1^{a_j}\partial r_2^{b_j}} (r_1,r_2).
\]
where $n_1+\sum_{j=1}^m a_j = k_1$, $n_2+\sum_{j=1}^m b_j = k_2$, and $ 0\leq m \leq k_1+k_2-n_1-n_2$. From Lemma \ref{lem:non} and \eqref{int1} of Lemma \ref{lem:phase},
\begin{align*}
\frac{\partial^{n_1+n_2}g_c}{\partial {r_1}^{n_1}\partial {r_2}^{n_2}}(r_1,r_2) e^{i\varphi(r_1,r_2)} \prod_{j=1}^m \frac{\partial^{a_j+b_j}\varphi}{\partial r_1^{a_j}\partial r_2^{b_j}} (r_1,r_2) &\ll_{l,k_1,k_2} c^{1/2}R^{-1-n_1-n_2}\|\psi\|_{W^{k_1+k_2,\infty}(0,\infty)}^2 \prod_{j=1}^m cT^2 R^{-2-a_j-b_j}\\
&= \frac{c^{1/2}}{R} \left(\frac{cT^2}{R^2}\right)^m R^{-k_1-k_2}.
\end{align*}
Because $cT^2/R^2 \gg c > dR^{\epsilon_2}$, for all sufficiently large $R$, the last expression is bounded from above by
\[
\frac{c^{1/2}}{R} \left(\frac{cT^2}{R^2}\right)^{k_1+k_2}R^{-k_1-k_2} = \frac{c^{1/2}}{R} \left(\frac{cT^2}{R^3}\right)^{k_1+k_2}.
\]
Now note that for $R$ that satisfies $R^2>GT^{1-\epsilon_1}$, we have
\[
\frac{cT^2}{R^3} \ll \frac{T^{1+\epsilon_1}}{GR} \ll\left(\frac{T^{1+3\epsilon_1}}{G^3}\right)^{1/2} \ll T^{\left(1+3\epsilon_1-3\theta\right)/2} \ll T^{-\left(3\theta-1\right)/4}.
\]
from the assumption that $\frac{3\theta-1}{6}>\epsilon_1$.
\end{proof}

\subsubsection{The Poisson summation formula and completion of the proof}
Applying the Poisson summation formula for the sum in $r_1$ and $r_2$, we get
\[
\sum_{r_1,r_2}f_c\left(r_1,r_2\right)e_c\left(-ur_1-vr_2\right)=\sum_{j,k}B\left(j,k\right)
\]
where
\begin{align*}
B\left(j,k\right)&=\iint f_c\left(r_1,r_2\right)e_c\left(-ur_1-vr_2\right)e\left(jr_1+kr_2\right)dr_1dr_2\\
&=\iint f_c\left(r_1,r_2\right)e\left(\left(j-\frac{u}{c}\right)r_1+\left(k-\frac{v}{c}\right)r_2\right)dr_1dr_2.
\end{align*}

We first show that the contribution coming from $(j,k) \neq (0,0)$ is negligible.
\begin{lemma}\label{fin:3}
Assume that $|y-T|<T/2$, and $0<d<R^{\frac{1}{2}}$. Fix a constant $\epsilon_2>0$, and assume that $dR^{\epsilon_2}<R^2G^{-1}T^{-1+\epsilon_1}$, and that $dR^{\epsilon_2}<c<R^2G^{-1}T^{-1+\epsilon_1}$. Then we have
\[
\sum_{\substack{j,k\\ (j,k)\neq (0,0)}}B(j,k) = O_{l,\theta}\left( T^{-20}\|\psi\|_{W^{m_0,\infty}}^2\right),
\]
where $m_0=\lceil\frac{100}{3\theta-1}\rceil$.
\end{lemma}
\begin{proof}
We first give an upper bound for $B(j,k)$ under the assumption that $\max \{j,k\}=j>0$. By integration by parts, we have
\[
|B(j,k)| = \left|\left(2\pi\left(j-\frac{u}{c}\right)\right)^{-m}\iint \left(\frac{\partial^m}{\partial r_1^m}f_c\left(r_1,r_2\right)\right)e\left(\left(j-\frac{u}{c}\right)r_1+\left(k-\frac{v}{c}\right)r_2\right)dr_1dr_2\right|.
\]
Because $f_c$ is supported in $\mathcal{B}=(R/l,lR)\times(R/l,lR)$, we may assume that the integral is taken over $\mathcal{B}$. From Lemma \ref{lem:po},
\[
\iint_\mathcal{B} \left(\frac{\partial^m}{\partial r_1^m}f_c\left(r_1,r_2\right)\right)e\left(\left(j-\frac{u}{c}\right)r_1+\left(k-\frac{v}{c}\right)r_2\right)dr_1dr_2\ll_{l,m}  c^{1/2}RT^{-m\left(3\theta-1\right)/4}\|\psi\|_{W^{m,\infty}}^2.
\]
From the assumption that $|u|,|v| \leq c/2$, we now have
\[
B(j,k) \ll_{l,m} (2|j|-1)^{-m} c^{1/2}RT^{-m\left(3\theta-1\right)/4}\|\psi\|_{W^{m,\infty}}^2,
\]
and hence
\[
B(j,k) \ll_{l,m} (2\max\{|j|,|k|\}-1)^{-m} c^{1/2}RT^{-m\left(3\theta-1\right)/4}\|\psi\|_{W^{m,\infty}}^2.
\]
Now by taking $m=m_0=\lceil\frac{100}{3\theta-1}\rceil>50$, we conclude that
\[
\sum_{\substack{j,k\\ (j,k)\neq (0,0)}} B(j,k) \ll_{l,\theta} c^{1/2} R T^{-25} \|\psi\|_{W^{m_0,\infty}}^2 \ll_{l,\theta} T^{-20}\|\psi\|_{W^{m_0,\infty}}^2.
\]
\end{proof}

For $B\left(0,0\right)$, observe from \eqref{stat} of Lemma \ref{lem:phase} that
\[
B(0,0)=\iint g_c\left(r_1,r_2\right)e^{i\varphi\left(r_1,r_2\right)-\frac{2\pi i}{c}\left(ur_1+vr_2\right)}dr_1dr_2
\]
has stationary phase in both $r_1$ and $r_2$ variables only when
\begin{equation}\label{eq2}
u\sim_l \frac{c^2T^2}{R^3}~\text{and}~ v \sim_l \frac{c^2T^2}{R^3}
\end{equation}
are satisfied. Otherwise, we perform integration by parts to show that $B(0,0)$ is negligibly small.
\begin{lemma}\label{fin:2}
Assume that $|y-T|<T/2$, and $0<d<R^{\frac{1}{2}}$. Fix a constant $\epsilon_2>0$, and assume that $dR^{\epsilon_2}<R^2G^{-1}T^{-1+\epsilon_1}$, and that $dR^{\epsilon_2}<c<R^2G^{-1}T^{-1+\epsilon_1}$. Then there exists a constant $\eta$ depending only on $l$ such that if either $u$ or $v$ is not in the range
\[
\lbrack \eta^{-1}\frac{c^2T^2}{2\pi R^3},\eta\frac{c^2T^2}{2\pi R^3}\rbrack
\]
then
\[
B(0,0)= O_{l, \epsilon_2}\left( T^{-40}\|\psi\|_{W^{n_0,\infty}\left(0,\infty\right)}^2\right),
\]
where $n_0 = \lceil \frac{100}{\epsilon_2} \rceil$.
\end{lemma}
\begin{proof}
From \eqref{stat} of Lemma \ref{lem:phase}, there exists a constant $\eta_0>0$ depending only on $l$ such that
\[
\eta_0^{-1} \frac{cT^2}{R^3}< \varphi_{r_i} < \eta_0 \frac{cT^2}{R^3}
\]
for $r_1,r_2 \in (R/l,lR)$. Let $\eta = 2\eta_0$ and assume without loss of generality that $u$ is not in the range
\[
\lbrack \eta^{-1}\frac{c^2T^2}{2\pi R^3},\eta\frac{c^2T^2}{2\pi R^3}\rbrack.
\]
Let $I_n(r_1,r_2)$ for $n \geq 0$ be given by $I_0=g_c$, and
\[
I_n(r_1,r_2) = i \frac{\partial}{\partial r_1}\frac{I_{n-1}(r_1,r_2)}{\varphi(r_1,r_2)_{r_1} - \frac{2\pi u}{c}}
\]
for $n \geq 1$, so that
\[
B(0,0) = \iint I_n(r_1,r_2)e^{i\varphi\left(r_1,r_2\right)-\frac{2\pi i}{c}\left(ur_1+vr_2\right)}dr_1dr_2.
\]
Then $I_n$ is a linear combination of
\[
\frac{\partial^{a_0}}{\partial r_1^{a_0}}g_c(r_1,r_2)\prod_{j=1}^n \frac{\partial^{a_j}}{\partial r_1^{a_j}}\frac{1}{\varphi(r_1,r_2)_{r_1} - \frac{2\pi u}{c}}
\]
where $\sum_{j=0}^n a_j=n$. For each $j$,
\[
\frac{\partial^{a_j}}{\partial r_1^{a_j}}\frac{1}{\varphi(r_1,r_2)_{r_1} - \frac{2\pi u}{c}}
\]
is a linear combination of
\[
\frac{1}{(\varphi(r_1,r_2)_{r_1} - \frac{2\pi u}{c})^{b_j+1}}\prod_{k=1}^{b_j}\frac{\partial^{b_{jk}}}{\partial r_1^{b_{jk}}}\left(\varphi(r_1,r_2)_{r_1} - \frac{2\pi u}{c}\right)
\]
where $\sum_{k=1}^{b_j}b_{jk}=a_j$, $b_j \leq a_j$ and $b_{jk} \geq 1$. Observe that
\[
\frac{1}{\varphi(r_1,r_2)_{r_1} - \frac{2\pi u}{c}} \ll_l \frac{R^3}{cT^2}
\]
from the assumption on $u$, and that
\[
\frac{\partial^{b_{jk}}}{\partial r_1^{b_{jk}}}\left(\varphi(r_1,r_2)_{r_1} - \frac{2\pi u}{c}\right)=\frac{\partial^{b_{jk}}}{\partial r_1^{b_{jk}}}\varphi(r_1,r_2)_{r_1}.
\]
Hence we have from Lemma \ref{lem:phase} that
\begin{align*}
&\frac{1}{(\varphi(r_1,r_2)_{r_1} - \frac{2\pi u}{c})^{b_j+1}}\prod_{k=1}^{b_j}\frac{\partial^{b_{jk}}}{\partial r_1^{b_{jk}}}\left(\varphi(r_1,r_2)_{r_1} - \frac{2\pi u}{c}\right)\\
\ll_{l,a_j} & \left(\frac{R^3}{cT^2}\right)^{b_j+1} \prod_{k=1}^{b_j} cT^2R^{-3-b_{jk}}\\
=& \left(\frac{R^3}{cT^2}\right)^{b_j+1} c^{b_j}T^{2b_j}R^{-3b_j-a_j}\\
=& c^{-1}R^{3-a_j}T^{-2}.
\end{align*}
Now we apply Lemma \ref{lem:non} so that
\begin{align*}
\frac{\partial^{a_0}}{\partial r_1^{a_0}}g_c(r_1,r_2)\prod_{j=1}^n \frac{\partial^{a_j}}{\partial r_1^{a_j}}\frac{1}{\varphi(r_1,r_2)_{r_1} - \frac{2\pi u}{c}} &\ll_{l,n} c^{1/2} R^{-1-a_0}\|\psi\|_{W^{n,\infty}\left(0,\infty\right)}^2\prod_{j=1}^n c^{-1}R^{3-a_j}T^{-2}\\
&=c^{1/2}R^{-1} (c^{-1}R^2T^{-2})^n \|\psi\|_{W^{n,\infty}\left(0,\infty\right)}^2\\
&\ll c^{1/2}R^{-1} c^{-n}\|\psi\|_{W^{n,\infty}\left(0,\infty\right)}^2,
\end{align*}
and therefore we have
\[
B(0,0) \ll_{l,n} c^{1/2}R c^{-n}\|\psi\|_{W^{n,\infty}\left(0,\infty\right)}^2.
\]
Because we assumed that $c> dR^{\epsilon_2} > R^{\epsilon_2}$, by taking $n = n_0 = \lceil \frac{100}{\epsilon_2} \rceil $, we conclude that
\[
B(0,0) \ll_{l, \epsilon_2} T^{-40}\|\psi\|_{W^{n_0,\infty}\left(0,\infty\right)}^2
\]
where we used $R^2 > GT^{1-\epsilon_1} \gg T$.
\end{proof}

In order to treat the remaining case for which \eqref{eq2} holds, we apply integration by parts to get
\begin{align*}
B(0,0)=&\iint f_c\left(r_1,r_2\right)e_c\left(-ur_1-vr_2\right)dr_1dr_2 \\
= &\iint \int_{0}^{r_1}\int_{0}^{r_2} e^{i\varphi\left(t_1,t_2\right)}e_c\left(-ut_1-vt_2\right)dt_1dt_2 g_c(r_1,r_2)_{r_1r_2} dr_1dr_2\\
\ll & \sup_{R/l<r_1<lR, R/l<r_2<lR}\left| \int_{0}^{r_1}\int_{0}^{r_2} e^{i\varphi\left(t_1,t_2\right)-\frac{2\pi i}{c}\left(ut_1+vt_2\right)}dt_1dt_2\right|\iint|g_c\left(r_1,r_2\right)_{r_1r_2}|dr_1dr_2.
\end{align*}
Note that the phase function $f(t_1,t_2)=\varphi\left(t_1,t_2\right)-\frac{2\pi }{c}\left(ut_1+vt_2\right)$ satisfies the estimates in the following Lemma with $\lambda = cT^2R^{-4}$, by Lemma \ref{lem:phase}.
\begin{lemma}[\cite{tit1934}]
Let $f\left(t_1,t_2\right)$ be a real and algebraic function defined in a rectangle $D=[a,b]\times [c,d] \subset \mathbb{R}^2$. Assume throughout $D$ that
\[
|f_{t_it_i}|\sim\lambda \text{ for } i=1,2,~|f_{t_1t_2}| \ll \lambda, \text{ and } \left|\frac{\partial \left(f_{t_1},f_{t_2}\right)}{\partial \left(t_1,t_2\right)}\right| \gg \lambda^2;
\]
then
\[
\iint_D e^{if\left(t_1,t_2\right)}dt_1dt_2 \ll \frac{1+|\log\left(b-a\right)|+|\log\left(d-c\right)|+|\log \lambda|}{\lambda}.
\]
\end{lemma}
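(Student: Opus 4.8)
\emph{Strategy.} The plan is to prove Lemma \ref{lem5} by iterating the classical one-dimensional second derivative test: for each fixed $x_2$ integrate out $x_1$ by stationary phase, and then treat the resulting oscillatory integral in $x_2$. The key observation is that along the stationary curve $\xi$ defined by $f_{x_1}(\xi(x_2),x_2)=0$ the phase $\Phi(x_2):=f(\xi(x_2),x_2)$ satisfies
\[
\Phi''(x_2)=\left.\frac{f_{x_1x_1}f_{x_2x_2}-f_{x_1x_2}^2}{f_{x_1x_1}}\right|_{x_1=\xi(x_2)},\qquad |\Phi''(x_2)|\gg\frac{\lambda^2}{\lambda}=\lambda,
\]
which follows by differentiating $f_{x_1}(\xi(x_2),x_2)=0$ twice (so that $\xi'=-f_{x_1x_2}/f_{x_1x_1}$ and $\Phi'=f_{x_2}(\xi,x_2)$); thus one gains $\lambda^{-1/2}$ at the $x_1$-stage and another $\lambda^{-1/2}$ at the $x_2$-stage, for the total $\lambda^{-1}$. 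Before doing this I would use that $f$ is algebraic of bounded degree to split $D$ into $O(1)$ sub-rectangles on each of which $f_{x_1x_1}$, $f_{x_2x_2}$ and $f_{x_1x_1}f_{x_2x_2}-f_{x_1x_2}^2$ have constant sign and the edge functions $f_{x_1}(a,\cdot)$, $f_{x_1}(b,\cdot)$ are monotone; it suffices to treat one such piece. On it, $x_1\mapsto f_{x_1}(x_1,x_2)$ is strictly monotone with at most one zero $\xi(x_2)$, a branch of an algebraic function of $x_2$, and $\{x_2:\xi(x_2)\in[a,b]\}=\{x_2:f_{x_1}(a,x_2)f_{x_1}(b,x_2)\le0\}$ is a union of $O(1)$ intervals.

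For the main term I would insert a smooth cutoff $\chi_0$ localizing $x_1$ to the window $|f_{x_1}(x_1,x_2)|\le\lambda^{1/2+\eta}$ about $\xi(x_2)$, for a small fixed $\eta>0$, and apply stationary phase in $x_1$. When $\xi(x_2)$ lies inside $(a,b)$ at distance $\gg\lambda^{-1/2+\eta}$ from the endpoints this produces $A(x_2)e^{i\Phi(x_2)}+O_N(\lambda^{-N})$ with $A(x_2)\asymp\lambda^{-1/2}$ algebraic of bounded degree; hence $A$ has $O(1)$ intervals of monotonicity and total variation $\ll\lambda^{-1/2}$, and the one-dimensional second derivative test, using $|\Phi''|\gg\lambda$, bounds $\int A(x_2)e^{i\Phi(x_2)}\,dx_2$ over the $O(1)$ relevant intervals by $\ll\lambda^{-1/2}\cdot\lambda^{-1/2}=\lambda^{-1}$.

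The complementary cutoff restricts to $|f_{x_1}|\gtrsim\lambda^{1/2+\eta}$, where repeated integration by parts in $x_1$ is legitimate (each step gaining $\ll\lambda/|f_{x_1}|^2\ll\lambda^{-2\eta}$ after the natural rescaling), leaving a bulk term of size $O_N(\lambda^{-N})$ and boundary terms at $x_1\in\{a,b\}$ whose leading parts are $\pm e^{if(a,x_2)}/(if_{x_1}(a,x_2))$ and $\pm e^{if(b,x_2)}/(if_{x_1}(b,x_2))$, supported, respectively, where $|f_{x_1}(a,x_2)|\gtrsim\lambda^{1/2}$ and where $|f_{x_1}(b,x_2)|\gtrsim\lambda^{1/2}$. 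The point is to keep these as oscillatory integrals in $x_2$ rather than estimate them trivially: the phase $f(a,x_2)$ has $|f_{x_2x_2}(a,x_2)|\sim\lambda$ by hypothesis — this is precisely where the assumption $|f_{x_2x_2}|\sim\lambda$ is used — while the amplitude $1/f_{x_1}(a,x_2)$ is $\ll\lambda^{-1/2}$ with $O(1)$ monotone pieces on its support, so the second derivative test again gives $\ll\lambda^{-1}$.

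What remains, and where I expect the real work and the logarithmic losses, are the transitional configurations near $\partial D$: when $\xi(x_2)$ lies in $[a,b]$ but within $\ll\lambda^{-1/2+\eta}$ of an endpoint, the stationary-phase boundary term is of size $\asymp 1/(\lambda\,\mathrm{dist}(\xi(x_2),\{a,b\}))$, and estimating it in absolute value over the corresponding $x_2$-interval gives a logarithmically divergent integral which, cut off at distance $\sim\lambda^{-1/2}$, contributes $\ll\lambda^{-1}\bigl(1+|\log(b-a)|+|\log\lambda|\bigr)$; the symmetric analysis with the roles of the variables exchanged, together with the remaining windows $|f_{x_1}(a,x_2)|\lesssim\lambda^{1/2}$ and $|f_{x_1}(b,x_2)|\lesssim\lambda^{1/2}$ handled by the trivial bound, supply the factor $|\log(d-c)|$ and a further logarithm. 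Throughout, algebraicity is essential: it guarantees that the stationary curve, the exceptional sets of $x_2$, and every amplitude appearing decompose into $O(1)$ monotone pieces, which is what makes the one-dimensional estimates applicable and the transitional regions tractable; without it these pieces could be arbitrarily intricate. Reassembling the $O(1)$ sub-rectangles from the initial subdivision then yields Lemma \ref{lem5}. (One may assume, as holds in the application, that $b-a$ and $d-c$ are at most polynomial in $\lambda^{-1}$, so that the $O_N(\lambda^{-N})$ errors are genuinely negligible.)
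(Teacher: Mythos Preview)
The paper does not actually prove Lemma \ref{lem5}: it is stated at the end of \S2.3 and then immediately used in \S2.4, with no proof environment in between. It is being quoted as a standard two-dimensional second-derivative (van der Corput) estimate; results of this shape go back to Titchmarsh and appear, for instance, in the exponential-sum literature surrounding \cite{sarluo}. So there is no ``paper's own proof'' to compare against.

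Your outline is the standard route to such a lemma and the core is correct: the identity
\[
\Phi''(x_2)=\frac{f_{x_1x_1}f_{x_2x_2}-f_{x_1x_2}^2}{f_{x_1x_1}}\Big|_{x_1=\xi(x_2)}
\]
together with the hypotheses gives $|\Phi''|\gg\lambda$, so iterating the one-dimensional second-derivative test yields the main $\lambda^{-1}$; algebraicity is exactly what limits the number of monotone pieces and stationary branches to $O(1)$; and the logarithms arise, as you say, from the transitional strips where $\xi(x_2)$ grazes an endpoint and from trivially bounding the short windows $|f_{x_1}(a,x_2)|\lesssim\lambda^{1/2}$. One small point worth tightening: in the boundary-term analysis you should check that the higher integration-by-parts boundary contributions (not just the leading $\pm e^{if}/(if_{x_1})$) are also $\ll\lambda^{-1}$ after the $x_2$-integration, and that the derivatives of the cutoff $\chi_0$ with respect to $x_2$ (through its dependence on $\xi(x_2)$) do not spoil the amplitude bounds; both are routine under your hypotheses but should be stated. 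With those details filled in, your argument is a valid proof of the lemma the paper merely quotes.
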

We therefore have
\[
\int_{0}^{r_1}\int_{0}^{r_2} e^{i\varphi\left(t_1,t_2\right)-\frac{2\pi i}{c}\left(ut_1+vt_2\right)}dt_1dt_2\ll_l \log T c^{-1}T^{-2}R^4
\]
uniformly in $R/l<r_1<lR$ and $R/l<r_2<lR$. We also know that
\[
\iint|g_c\left(r_1,r_2\right)_{r_1r_2}|dr_1dr_2 \ll_l c^{1/2}R^{-1}\|\psi\|_{W^{2,\infty}(0,\infty)}^2
\]
from Lemma \ref{lem:non}. Combining these estimates, we obtain:
\begin{lemma} \label{fin:1}
Assume that $|y-T|<T/2$, and $0<d<R^{\frac{1}{2}}$. Fix a constant $\epsilon_2>0$, and assume that $dR^{\epsilon_2}<R^2G^{-1}T^{-1+\epsilon_1}$, and that $dR^{\epsilon_2}<c<R^2G^{-1}T^{-1+\epsilon_1}$. Then we have
\[
B(0,0) = O_{l} \left(c^{-1/2}R^3 T^{-2}\log T\|\psi\|_{W^{2,\infty}(0,\infty)}^2\right).
\]
\end{lemma}
Lemma \ref{lem:poisson} now follows immediately from Lemma \ref{fin:3}, Lemma \ref{fin:2}, and Lemma \ref{fin:1}.

\section{Quantitative quantum ergodicity-II}\label{qqe2}
\subsection{The case \texorpdfstring{$m=0$}{m=0}}
Let $G\left(s\right)$ be the Mellin transform of $\psi \in C_0^\infty(0,\infty)$ whose support is in $(1/l,l)$:
\[
G\left(s\right)=\int_0^\infty \psi\left(y\right)y^{s-1}dy.
\]
Then $G(s)$ is entire function in $s$, and the Mellin inversion formula is given by
\[
\psi(X)=\frac{1}{2\pi i } \int_{\sigma} X^{-s} G(s) ds,
\]
where $(\sigma)$ is the contour $(\sigma-i\infty, \sigma+i\infty)$. Therefore we have that
\[
\sum_{n =1}^\infty \rho_\phi\left(n\right)^2 \psi\left(\frac{\pi |n|}{X}\right) = \frac{1}{2 \pi i} \int_{\left(2\right)} \sum_{n\geq 1} \frac{\rho_\phi\left(n\right)^2}{n^s}\left(\frac{X}{\pi}\right)^s G\left(s\right) ds=\frac{1}{2 \pi i} \int_{\left(2\right)} L(s,\phi \times \phi)\left(\frac{X}{\pi}\right)^s G\left(s\right) ds,
\]
and by shifting the contour to $(1/2)$, we get
\[
\frac{1}{2 \pi i} \int_{\left(1/2\right)} L(s,\phi \times \phi) \left(\frac{X}{\pi}\right)^s G\left(s\right) ds+\frac{12}{\pi^3}X\int_0^\infty \psi(y)dy,
\]
since  $L(s,\phi \times \phi) =\sum_{n\geq 1} \frac{\rho_\phi\left(n\right)^2}{n^s}$ has a simple pole at $s=1$ whose residue is $12\pi^{-2}$ \cite{MR1431508}.

Note that we have a factorization
\begin{align*}
\sum_{n\geq 1} \frac{\rho_\phi\left(n\right)^2}{n^s} = \rho_\phi(1)^2 \sum_{n\geq 1} \frac{\lambda_\phi\left(n\right)^2}{n^s} = \rho_\phi(1)^2 \frac{\zeta(s)}{\zeta(2s)} L\left(s, {\rm sym}^2 \phi\right)
\end{align*}
where $\zeta(s)$ is the Riemann zeta function and
\[
L\left(s, {\rm sym}^2 \phi\right)= \sum_{n=1}^\infty \frac{c_\phi(n)}{n^s}= \sum_{n=1}^\infty \frac{\sum_{l^2k=n}\lambda_\phi(k^2)}{n^s}
\]
is the symmetric square $L$-function attached to $\phi$, so we have
\[
\sum_{n =1}^\infty \rho_\phi\left(n\right)^2 \psi\left(\frac{\pi |n|}{X}\right)-\frac{12}{\pi^3}X\int_0^\infty \psi(y)dy =\frac{\rho_\phi(1)^2}{2 \pi i} \int_{\left(1/2\right)}  \frac{\zeta(s)}{\zeta(2s)} L\left(s, {\rm sym}^2 \phi\right)\left(\frac{X}{\pi}\right)^s G\left(s\right) ds.
\]

Then it is known that $L(s,{\rm sym} \phi^2)$ is entire and that the following functional equation is satisfied \cite{shim}:
\[
\Lambda(s,{\rm sym} \phi^2)=\Lambda(1-s,{\rm sym} \phi^2),
\]
where $\Lambda(s,{\rm sym} \phi^2)$ is given by
\begin{equation}\label{finfinfin}
\Lambda(s,{\rm sym} \phi^2) = \pi^{-\frac{3}{2}s}\Gamma\left(\frac{s}{2}\right)\Gamma\left(\frac{s}{2}+it_\phi\right)\Gamma\left(\frac{s}{2}-it_\phi\right)L(s,{\rm sym} \phi^2)=\gamma(s,{\rm}\phi^2)L(s,{\rm sym} \phi^2).
\end{equation}
We use approximate functional equation (equation below (32) \cite{sarluo2} or Theorem 5.3 \cite{ant}) to represent $L(s,{\rm sym} \phi^2)$ as a smooth finite sum of $c_\phi(n)n^{-s}$ of length at most $t_\phi^{1+\epsilon}$.
\[
L(s,{\rm sym} \phi^2)\approx \sum_{n=1}^\infty \frac{c_\phi(n)}{n^s} V_s^1(\frac{n}{t_\phi})+\sum_{n=1}^\infty \frac{c_\phi(n)}{n^{1-s}} V_s^2(\frac{n}{t_\phi}).
\]
Plugging this into \eqref{finfinfin}, we get a smooth sum of $\lambda_\phi(n^2)$ whose length is at most $X^{1+\epsilon}$. Now the proof of the case when $m=0$ follows by following the proof of the case when $m\neq 0$.

\section{Proof of the corollaries}
\subsection{Quantitative Quantum Ergodicity}
In this section, we prove Corollary \ref{cor122}. For this purpose, we first approximate
\[
\int_\mathbb{X} P_{m,h}\left(z\right) |\phi\left(z\right)|^2 dV
\]
by a shifted convolution sum.
\begin{theorem}\label{app}
Fix $ 1/2 >\kappa >0$ and $L>1$. For any given $h \in C_0^\infty(1/L,L)$, we have the following estimate uniformly in $0 \leq m \ll t_\phi^\kappa$
\begin{multline*}
\int_\mathbb{X} P_{m,h}\left(z\right) |\phi\left(z\right)|^2 dV = \frac{\pi}{ t_\phi}\sum_{n \neq 0,-h}\rho_\phi(n){\rho_\phi(n+m)}k_{m,h}\left(\frac{|n|}{t_\phi}\right)\\
+O_{A,L,\epsilon}\left(\|h\|_{W^{A+1,\infty}}\left(t_\phi^{-1/2+3\kappa+\epsilon}+t^{-\kappa A+\epsilon}\right)\right),
\end{multline*}
for any $A>0$ and $\epsilon>0$.

Here $k_{m,h}$ is given by:
\begin{align*}
k_{m,h}(1/u) = u\int_0^{u} h\left(\frac{y}{2\pi}\right) \frac{\cos \left(m\sqrt{u^2-y^2}\right)}{\sqrt{u^2-y^2}}\frac{dy}{y}.
\end{align*}
\end{theorem}
To prove Theorem \ref{app}, we need some lemmas.
\begin{lemma}\label{lem:fin}
\begin{equation}\label{poin}
\int_\mathbb{X} P_{m,h}\left(z\right) |\phi\left(z\right)|^2 dV = \sum_{n \neq 0,-m}\rho_\phi(n){\rho_\phi(n+m)}G_{t_\phi,h}(n,n+m)
\end{equation}
where $G_{t,h}$ is given by
\begin{equation}\label{int}
G_{t,h}(n_1,n_2)= \cosh (\pi t) \int_0^\infty K_{it}(2\pi|n_1|y)K_{it}(2\pi |n_2|y) h(y) y^{-1}dy.
\end{equation}
\end{lemma}
\begin{proof}
Recall that the Poincar\'e series is given by
\[
P_{m,h}(z)=\sum_{\gamma \in \Gamma_\infty \backslash \Gamma} h(\mathrm{Im}(\gamma z))e^{2\pi i m \mathrm{Re}(\gamma z)},
\]
for $m\in \mathbb{Z}$ and $h \in C_0^\infty \left(0,\infty\right)$. By unfolding the integral, we may rewrite
\[
\int_\mathbb{X} P_{m,h}\left(z\right) |\phi\left(z\right)|^2 dV
\]
as follows:
\begin{align*}
\int_\mathbb{X} P_{m,h}\left(z\right) |\phi\left(z\right)|^2 dV &= \int_\mathbb{X} \sum_{\gamma \in \Gamma_\infty \backslash \Gamma} h(\mathrm{Im}(\gamma z))e^{2\pi i m \mathrm{Re}(\gamma z)} |\phi\left(z\right)|^2 dV\\
&=\int_0^\infty \int_{-1/2}^{1/2}h(y)e(mx) |\phi(x+iy)|^2 \frac{dxdy}{y^2}\\
&= \cosh(\pi t_\phi)\int_0^\infty h(y) \int_{-1/2}^{1/2}e(mx) \left|\sum_{n \neq 0} \rho_\phi(n) K_{it_\phi}(2\pi |n|y) e(nx)\right|^2 \frac{dxdy}{y}\\
&= \cosh(\pi t_\phi)\int_0^\infty h(y) \sum_{n \neq 0,-m} \rho_\phi(n)\rho_\phi(n+m) K_{it_\phi}(2\pi |n|y)K_{it_\phi}(2\pi |n+m|y)  \frac{dy}{y}\\
&=  \sum_{n \neq 0,-m} \rho_\phi(n)\rho_\phi(n+m) \cosh(\pi t_\phi)\int_0^\infty h(y)K_{it_\phi}(2\pi |n|y)K_{it_\phi}(2\pi |n+m|y)  \frac{dy}{y}.\qedhere
\end{align*}
\end{proof}

Now assume for the rest of the section that $h$ is compactly supported smooth function supported in $(1/L,L)$ for some $L>1$, and $m \ll t_\phi^\kappa$ with a fixed constant $0 < \kappa < 1$.
\begin{lemma}\label{lem:fin1}
For any fixed $0<\delta<1-\kappa$, we have
\begin{align*}
&\sum_{n \neq 0,-m}\rho_\phi(n){\rho_\phi(n+m)}G_{t_\phi,h}(n,n+m)\\
=&\sum_{|n| \gg t_\phi^{1-\delta}}\rho_\phi(n){\rho_\phi(n+m)}G_{t_\phi,h}(n,n+m)+O_{\epsilon,L}(t_\phi^{-\delta+\epsilon}\|h\|_{L^\infty}).
\end{align*}
\end{lemma}
\begin{proof}
We need the following estimate for Fourier coefficients \cite{iwan90}:
\begin{equation}\label{partial}
\sum_{ n < X} |\rho_\phi(n)|^2 \ll_\epsilon Xt_\phi^\epsilon.
\end{equation}
Observe from the asymptotic expansion of the $K$-Bessel function \cite{er81} (or Corollary 3.2 of \cite{gzs}) that when $n > Lt$, $G_{t,h}(n,n)$ is negligible, whereas for $n\leq Lt$,
\begin{align*}
G_{t,h}(n,n) &= \cosh( \pi t) \int_0^\infty h(y) K_{it}^2(2\pi n y) \frac{dy}{y}\\
&= \cosh( \pi t) \int_0^\infty h(y/(2\pi n)) K_{it}^2(y) \frac{dy}{y}\\
&\ll \int_0^{t-100t^{1/3}}|h(y/(2\pi n))| (t^2-y^2)^{-1/2}\frac{dy}{y}+ O(t^{-4/3} \|h\|_{L^\infty}),
\end{align*}
and
\begin{align*}
\int_0^{t-100t^{1/3}}|h(y/(2\pi n))| (t^2-y^2)^{-1/2}\frac{dy}{y}&\leq \int_0^{t}|h(y/(2\pi n))| (t-y)^{-1/2}(t+y)^{-1/2}\frac{dy}{y}\\
&\leq \frac{1}{\sqrt{t}}\int_0^{t}|h(y/(2\pi n))| (t-y)^{-1/2}\frac{dy}{y}\\
&= \frac{1}{\sqrt{2\pi n t}}\int_0^{t/(2\pi n)}|h(y)| (t/(2\pi n)-y)^{-1/2}\frac{dy}{y}.
\end{align*}
We first consider the case when $t/(2\pi n) >2L$, for which we have
\begin{align*}
\frac{1}{\sqrt{2\pi n t}}\int_0^{t/(2\pi n)}|h(y)| (t/(2\pi n)-y)^{-1/2}\frac{dy}{y}&=\frac{1}{\sqrt{2\pi n t}}\int_{1/L}^{L}|h(y)| (t/(4\pi n)+(t/(4\pi n)-y))^{-1/2}\frac{dy}{y}\\
 &\ll_L \frac{\|h(y)\|_{L^\infty}}{t}.
\end{align*}
When $t/ (4\pi L) n<Lt$, we have
\begin{align*}
\frac{1}{\sqrt{2\pi n t}}\int_0^{t/(2\pi n)}|h(y)| (t/(2\pi n)-y)^{-1/2}\frac{dy}{y}&\leq \frac{L\|h(y)\|_{L^\infty}}{\sqrt{2\pi n t}}\int_0^{t/(2\pi n)} (t/(2\pi n)-y)^{-1/2}dy\\
&\ll_L \frac{\|h(y)\|_{L^\infty}}{t}.
\end{align*}
Therefore for any $ t_\phi^{\kappa+\epsilon} \ll X \ll t_\phi$,
\begin{equation*}
\sum_{0<|n|,|n+m| <X} \rho_\phi(n){\rho_\phi(n+m)}G_{t_\phi,h}(n,n+m) \ll_{L,\epsilon} Xt_\phi^{-1+\epsilon}\|h\|_{L^\infty},
\end{equation*}
so, for any fixed $0<\delta<1-\kappa$, we have
\begin{align*}
&\sum_{n \neq 0,-m}\rho_\phi(n){\rho_\phi(n+m)}G_{t_\phi,h}(n,n+m)\\
=&\sum_{|n| \gg t_\phi^{1-\delta}}\rho_\phi(n){\rho_\phi(n+m)}G_{t_\phi,h}(n,n+m)+O_{\epsilon,L}(t_\phi^{-\delta+\epsilon}\|h\|_{L^\infty}).\qedhere
\end{align*}
\end{proof}

\begin{lemma}\label{lem:fin2}
For $0 < \kappa+\delta < 1$, the following holds uniformly in $t^{1-\delta}\ll n \ll Lt$ and in $0 \leq m \ll t^\kappa$:
\[
G_{t,h}(n,n+m)=\frac{\pi}{t}k_m\left(\frac{\sqrt{n(n+m)}}{t}\right)+ O_{A,L}\left(\frac{\|h\|_{W^{A+1,\infty}}}{\sqrt{n(n+m)}}\left(t^{-2+3\delta+3\kappa}+t^{-\kappa A}\right)\right).
\]
Here $k_m(u)$ is given by:
\[
k_m(1/u)=u\int_0^u \frac{\cos\left(m\sqrt{u^2-y^2}\right)}{\sqrt{u^2-y^2}} h\left(\frac{y}{2\pi}\right)y^{-1}dy.
\]
\end{lemma}
\begin{proof}
Let
\[
f(r) = \int_0^\infty h(y) \cos (2\pi ry)  \frac{dy}{y}.
\]
Then by the formula from \cite{ryzhik}
\[
\cosh(\pi t)\int_0^\infty K_{it}(2\pi n_1 y)K_{it}(2\pi n_2 y) \cos(2\pi r y) dy
= \frac{\pi}{8\sqrt{n_1 n_2}} P_{-\frac{1}{2}+it}\left(\frac{n_1^2+n_2^2+r^2}{2n_1 n_2}\right)
\]
and by the asymptotic expansion of the Legendre function $P_{-1/2+it}$ from \cite{du91}, we have
\begin{align*}
&\cosh (\pi t)\int_0^\infty K_{it}(2\pi ny)K_{it}(2\pi (n+m)y) h(y)\frac{dy}{y}\\
&=\frac{\pi}{2\sqrt{n(n+m)}}\int_0^\infty  f(r)P_{-\frac{1}{2}+it}\left(1+\frac{m^2+r^2}{2n(n+m)}\right)dr\\
&=\frac{\pi}{2\sqrt{n(n+m)}}\int_0^\infty  f(r)\left(\frac{\ln (s+\sqrt{s^2-1})}{\sqrt{s^2-1}}\right)^{\frac{1}{2}}J_0\left(t\ln \left(s+\sqrt{s^2-1}\right)\right)dr\\
&+O\left(\frac{\|f\|_{L^1}}{t\sqrt{n(n+m)}}\right).
\end{align*}
Here $s=1+\frac{m^2+r^2}{2n(n+m)}=1+\alpha$. Since $f(r)=O_{A,L}((1+|r|)^{-A}\|h\|_{W^{A,\infty}})$, we may assume that the integral is taken in the range $r \ll t^\kappa$ plus an error term of $O_{A,L}(t^{-\kappa A}\|h\|_{W^{A+1,\infty}}))$. For such $r$, $\alpha =O_L(t^{2\kappa+2\delta-2})=o_{\kappa,\delta,L}(1)$. Then we have
\begin{align*}
\ln (s+\sqrt{s^2-1})&=\ln \left(1+\alpha +\sqrt{\alpha^2+2\alpha}\right)\\
&= \alpha +\sqrt{\alpha^2 + 2\alpha} - \frac{1}{2}\left(\alpha +\sqrt{\alpha^2 + 2\alpha}\right)^2 + O(\alpha^{3/2})\\
&=\sqrt{\alpha^2 + 2\alpha} - \alpha^2 - \alpha\sqrt{\alpha^2+2\alpha} +O(\alpha^{3/2})\\
&=\sqrt{\alpha^2 + 2\alpha} +O(\alpha^{3/2}),
\end{align*}
and
\begin{align*}
\sqrt{\alpha^2 + 2\alpha} &= \sqrt{2\alpha}\sqrt{1+\frac{\alpha}{2}}\\
&= \sqrt{2\alpha} + O\left(\alpha^{3/2}\right)\\
&= \sqrt{\frac{m^2+r^2}{n(n+m)}}+O\left(\alpha^{3/2}\right).
\end{align*}
From $\alpha = O\left(\frac{t^{2\kappa}}{n^2}\right)$, we conclude that
\begin{align*}
\frac{\ln (s+\sqrt{s^2-1})}{\sqrt{s^2-1}}&=1+O\left(\frac{t^{2\kappa}}{n^2}\right)\\
t\ln (s+\sqrt{s^2-1})&= \frac{t}{\sqrt{n(n+m)}}\sqrt{m^2+r^2}+O\left(t^{-2+3\delta+3\kappa}\right),
\end{align*}
hence
\begin{align*}
&\frac{\pi}{2\sqrt{n(n+m)}}\int_0^\infty  f(r)\left(\frac{\ln (s+\sqrt{s^2-1})}{\sqrt{s^2-1}}\right)^{\frac{1}{2}}J_0(t\ln (s+\sqrt{s^2-1}))dr \\
=&\frac{\pi}{2\sqrt{n(n+m)}}\int_0^\infty  f(r)J_0\left(\frac{t}{\sqrt{n(n+m)}}\sqrt{m^2+r^2}\right)dr+O_{A,L}\left(\frac{t^{-2+3\delta+3\kappa}\|h\|_{L^\infty}}{n}\right) .
\end{align*}
Now we use the following formula from \cite{ryzhik}
\[
2\int_0^{\frac{v}{2\pi}}\frac{\cos\left(h\sqrt{v^2-4\pi^2y^2}\right)}{\sqrt{v^2-4\pi^2y^2}} \cos(2\pi ry) dy=J_0(v\sqrt{h^2+r^2}),
\]
to obtain
\begin{align*}
&u\int_0^\infty f(r) J_0\left(u\sqrt{m^2+r^2}\right) dr\\
=&2u\int_0^{\frac{u}{2\pi}}\frac{\cos\left(m\sqrt{u^2-4\pi^2y^2}\right)}{\sqrt{u^2-4\pi^2y^2}}h(y)\frac{dy}{y}\\
=& 2u\int_0^u \frac{\cos\left(m\sqrt{u^2-y^2}\right)}{\sqrt{u^2-y^2}} h\left(\frac{y}{2\pi}\right)dy.\qedhere
\end{align*}
\end{proof}

We combine Lemma \ref{lem:fin}, \ref{lem:fin1}, and \ref{lem:fin2} to prove Theorem \ref{app}.
\begin{proof}[Proof of Theorem \ref{app}]
From Lemma \ref{lem:fin2} and \eqref{partial}, we have that
\begin{align*}
\sum_{|n| \gg t_\phi^{1-\delta}}\rho_\phi(n){\rho_\phi(n+m)}G_{t_\phi,h}(n,n+m)
-& \frac{\pi}{t_\phi}\sum_{|n| \gg t_\phi^{1-\delta}}\rho_\phi(n){\rho_\phi(n+m)}k_m\left(\frac{\sqrt{n(n+m)}}{t_\phi}\right)\\
=& O_{A,L}\left( \sum_{|n|\ll Lt_\phi }\frac{|\rho_\phi(n){\rho_\phi(n+m)}|}{\sqrt{n(n+m)}}\|h\|_{W^{A+1,\infty}}\left(t_\phi^{-2+3\delta+3\kappa}+t_\phi^{-\kappa A}\right)\right)\\
=&O_{\epsilon,A,L}\left( \|h\|_{W^{A+1,\infty}}\left(t_\phi^{-2+3\delta+3\kappa+\epsilon}+t_\phi^{-\kappa A+\epsilon}\right)\right).
\end{align*}
Observe that
\begin{align*}
k_{m,h}(u) &= \frac{1}{u}\int_0^{1/u} h\left(\frac{y}{2\pi}\right) \frac{\cos \left(m\sqrt{u^{-2}-y^2}\right)}{\sqrt{u^{-2}-y^2}}\frac{dy}{y}\\
&=\int_0^{1} h\left(\frac{y}{2\pi u}\right) \frac{\cos \left(\frac{m}{u}\sqrt{1-y^2}\right)}{\sqrt{1-y^2}}\frac{dy}{y},
\end{align*}
from which we infer that
\begin{equation}\label{fin:partial}
\frac{\partial^N}{\partial u^N}k_{m,h}(u) \ll_N (m+1)^N u^{-2N} \|h\|_{W^{N,\infty}}.
\end{equation}
When $u<1/(4\pi L)$, we reexpress the integral as follows,
\begin{align*}
k_{m,h}(u) &= \frac{1}{u}\int_0^{1/u} h\left(\frac{y}{2\pi}\right) \frac{\cos \left(m\sqrt{u^{-2}-y^2}\right)}{\sqrt{u^{-2}-y^2}}\frac{dy}{y}\\
&= \frac{1}{u}\int_0^{1/u} h\left(\frac{\sqrt{u^{-2}-x^2}}{2\pi}\right) \frac{\cos \left(m x\right)}{u^{-2}-x^2}dx\\
&= \frac{1}{u}\int_0^{3\pi L} h\left(\frac{\sqrt{u^{-2}-x^2}}{2\pi}\right) \frac{\cos \left(m x\right)}{u^{-2}-x^2}dx\\
&= \frac{1}{u}\int_0^{3\pi L} g\left(\frac{2\pi}{\sqrt{u^{-2}-x^2}}\right) \frac{\cos \left(m x\right)}{u^{-2}-x^2}dx,
\end{align*}
where $g \in C_0^\infty [1/L,L]$ is defined by $g(x)=h(1/x)$. Note that $\|g\|_{W^{N,\infty}} \ll_L \|h\|_{W^{N,\infty}}$. From this we deduce that
\begin{equation}\label{fin:partial2}
\frac{\partial^N}{\partial u^N}k_{m,h}(u) \ll_{N,L}  u^{-N-1} \|g\|_{W^{N,\infty}} \ll_L u^{-N-1} \|h\|_{W^{N,\infty}}
\end{equation}
for $u<1/(4\pi L)$. Combining \eqref{fin:partial} and \eqref{fin:partial2}, we have
\begin{equation}\label{fin:partial1}
\frac{\partial^N}{\partial u^N}k_{m,h}(u) \ll_{N,L} \left\{ \begin{array}{cl}\|h\|_{L^{\infty}}&N=0\\ (m+1)^N u^{-N-1} \|h\|_{W^{N,\infty}} & N \geq 1 \end{array}\right.
\end{equation}
By mean value theorem and \ref{fin:partial1},
\begin{multline*}
k_m\left(\frac{\sqrt{n(n+m)}}{t_\phi}\right) - k_m\left(\frac{|n|}{t_\phi}\right)=k_m'\left(\frac{|n|+\alpha}{t_\phi}\right)\frac{\sqrt{n(n+m)}-|n|}{t_\phi}\\
 = O\left(\frac{m^2 t_\phi\|h\|_{W^{1,\infty}}}{n^2}\right)=O\left(\frac{ t_\phi^{\delta+2\kappa}\|h\|_{W^{1,\infty}}}{n}\right),
\end{multline*}
and
\begin{multline*}
\frac{\pi}{t_\phi}\sum_{|n| \gg t_\phi^{1-\delta}}\rho_\phi(n){\rho_\phi(n+m)}k_m\left(\frac{\sqrt{n(n+m)}}{t_\phi}\right) -\frac{\pi}{t_\phi}\sum_{|n| \gg t_\phi^{1-\delta}}\rho_\phi(n){\rho_\phi(n+m)}k_m\left(\frac{|n|}{t_\phi}\right)\\
=O_{\epsilon, L}(t_\phi^{\delta+2\kappa-1+\epsilon}\|h\|_{W^{1,\infty}}).
\end{multline*}
Theorem \ref{app} follows by combining this estimate with Lemma \ref{lem:fin1}, and then by taking $\delta=1/2$.
\end{proof}

We are ready to prove Corollary \ref{cor122}.
\begin{proof}[Proof of Corollary \ref{cor122}]
Assume that $|t_\phi-T| <T^\theta$ with some $1/3<\theta<1/2-\kappa$. Then by mean value theorem and \eqref{fin:partial},
\[
k_{m,h}\left(\frac{|n|}{t_\phi}\right)- k_{m,h}\left(\frac{|n|}{T}\right)=\left(\frac{|n|}{t_\phi}- \frac{|n|}{T}\right) k_{m,h}\left(\frac{|n|}{T+\alpha}\right) \ll \frac{|n|}{T^{2-\theta}} (m+1) \frac{T^2}{n^2}\|h\|_{W^{1,\infty}} = \frac{ (m+1)T^\theta\|h\|_{W^{1,\infty}}}{|n|},
\]
and so
\begin{align*}
&\frac{\pi}{ t_\phi}\sum_{n \neq 0,-h}\rho_\phi(n){\rho_\phi(n+m)}k_{m,h}\left(\frac{|n|}{t_\phi}\right)\\
=&\frac{\pi}{ T}\sum_{n \neq 0,-h}\rho_\phi(n){\rho_\phi(n+m)}k_{m,h}\left(\frac{|n|}{T}\right)\\
&+O\left(T^{-2+\theta}\|h\|_{L^{\infty}}\sum_{0\leq n \ll LT}{|\rho_\phi(n)|^2}\right)+O\left(\frac{ (m+1)T^\theta\|h\|_{W^{1,\infty}}}{t_\phi}\sum_{0\leq n \ll LT}\frac{|\rho_\phi(n)|^2}{|n|}\right)\\
=&\frac{\pi}{ T}\sum_{n \neq 0,-h}\rho_\phi(n){\rho_\phi(n+m)}k_{m,h}\left(\frac{|n|}{T}\right)+O_{L,\epsilon}\left(T^{-1+\theta+\kappa+\epsilon} \|h\|_{W^{1,\infty}}\right)\\
=&\frac{\pi}{ T}\sum_{n \neq 0,-h}\rho_\phi(n){\rho_\phi(n+m)}k_{m,h}\left(\frac{|n|}{T}\right)+O_{L,\epsilon}\left(T^{-1/2+\epsilon} \|h\|_{W^{1,\infty}}\right),
\end{align*}
where we used \eqref{partial} and summation by parts.

It is immediate from the definition that $k_{m,h}$ is supported in $[0,L]$, but it may not be supported away from $0$. Hence we cannot apply Theorem \ref{thm} directly to the sum
\[
\frac{\pi}{ T}\sum_{n \neq 0,-h}\rho_\phi(n){\rho_\phi(n+m)}k_{m,h}\left(\frac{|n|}{T}\right).
\]
To handle this difficulty, fix a non-negative function $\eta \in C_0^\infty [1/2,2]$ such that $\sum_{j\in \mathbb{Z}} \eta(2^j x) =1$ for all $x>0$. Consider the following sum
\begin{align*}
&\frac{\pi}{T} \sum_{j\in\mathbb{Z}}\sum_{n \neq 0,-h}\rho_\phi(n){\rho_\phi(n+m)}\eta\left(\frac{|n|}{2^j}\right)k_{m,h}\left(\frac{|n|}{T}\right)\\
=&\frac{\pi}{T} \sum_{\substack{j\in\mathbb{Z}\\2^j<T^{1/2}}}\sum_{n \neq 0,-h}\rho_\phi(n){\rho_\phi(n+m)}\eta\left(\frac{|n|}{2^j}\right)k_{m,h}\left(\frac{|n|}{T}\right)\\
&+\frac{\pi}{T} \sum_{\substack{j\in\mathbb{Z}\\T^{1/2}\leq 2^j<2LT}}\sum_{n \neq 0,-h}\rho_\phi(n){\rho_\phi(n+m)}\eta\left(\frac{|n|}{2^j}\right)k_{m,h}\left(\frac{|n|}{T}\right).
\end{align*}
We bound the first sum using \eqref{partial}:
\begin{align*}
&\left|\frac{1}{T} \sum_{\substack{j\in\mathbb{Z}\\2^j<T^{1/2}}}\sum_{n \neq 0,-h}\rho_\phi(n){\rho_\phi(n+m)}\eta\left(\frac{|n|}{2^j}\right)k_{m,h}\left(\frac{|n|}{T}\right) \right|\\
\leq &\frac{1}{T} \sum_{\substack{j\in\mathbb{Z}\\2^j}}\sum_{n \neq 0,-h, |n| <2T^{1/2}}\left|\rho_\phi(n)\rho_\phi(n+m)k_{m,h}\left(\frac{|n|}{T}\right)\right| \\
\ll_{\epsilon,L} &T^{-1/2+\epsilon}\|h\|_{L^\infty}.
\end{align*}
For the second sum, note from \eqref{fin:partial2} that
\[
\frac{\partial^N}{\partial x^N} \left(\eta(x) k_{m,h}\left(\frac{2^j}{T}x\right)\right) \ll_{N,L} 2^{-j}T(1+m)^N \|h\|_{W^{N,\infty}}.
\]
Therefore by Theorem \ref{thm}, we have that
\begin{multline*}
\sum_{|t_\phi-T| <T^{\theta}}\left|\frac{1}{T} \sum_{n \neq 0,-h}\rho_\phi(n){\rho_\phi(n+m)}\eta\left(\frac{|n|}{2^j}\right)k_{m,h}\left(\frac{|n|}{T}\right)-\delta_{0,m}\frac{12}{\pi^2}\frac{2^{j}}{T}\int\eta(x) k_{m,h}\left(\frac{2^j}{T}x\right)dx \right|^2\\
\ll_{\theta, \epsilon} (m^{3/2}+1) 2^j T^{-1+\theta+\epsilon} 2^{-j}T (1+m)^B\|h\|_{W^{B,\infty}} \ll T^{\theta+\kappa (B+3/2)+\epsilon}\|h\|_{W^{B,\infty}},
\end{multline*}
for some $B>0$ depending only on $\theta, \epsilon$.

Combining all these estimates and Theorem \ref{app}, we conclude that
\begin{multline*}
\sum_{|t_\phi-T| <T^\theta}\left|\int_\mathbb{X} P_{m,h}\left(z\right) |\phi\left(z\right)|^2 dV- \delta_{0,m}\sum_{T^{1/2}\leq 2^j<2LT}\frac{12}{\pi}\int\eta\left(\frac{T}{2^j}x\right) k_{m,h}\left(x\right)dx\right|^2\\
\ll_{\theta, \epsilon,L} \left(\sum_{T^{1/2}\leq 2^j<2LT} 1^2\right) T^{\theta+\kappa (B+3/2)+\epsilon}\|h\|_{W^{B,\infty}} + T^{-2\kappa A+2\epsilon+1+\theta }\|h\|_{W^{A,\infty}}\\
\ll_{\epsilon} T^{\theta+\kappa (B+3/2)+2\epsilon}\|h\|_{W^{B,\infty}} + T^{-2\kappa A+2 }\|h\|_{W^{A,\infty}}.
\end{multline*}

To complete the proof of Corollary \ref{cor122} for $1/3<\theta<1/2-\kappa$, we use \eqref{fin:partial2} so that
\[
\sum_{T^{1/2}\leq 2^j<2LT}\frac{12}{\pi}\int\eta\left(\frac{T}{2^j}x\right) k_{0,h}\left(x\right)dx = \frac{12}{\pi}\int_0^\infty k_{0,h}(x)dx + O(T^{-1/2}\|h\|_{L^\infty}).
\]
Now
\begin{align*}
\frac{12}{\pi}\int k_{0,h}(x)dx &= \frac{12}{\pi}\int_0^\infty \int_0^1 h(\frac{y}{2\pi x}) \frac{dydx}{y\sqrt{1-y^2}} \\
&= \frac{12}{\pi}\int_0^\infty \int_0^1 h(u) \frac{dydu}{2\pi u^2\sqrt{1-y^2} }\\
&=\frac{3}{\pi}\int_0^\infty  h(u) \frac{du}{u^2 }
\end{align*}
and
\[
\frac{3}{\pi}\int_{\mathbb{X}} P_{m,h}(z) dV = \delta_{m,0}\frac{3}{\pi}\int_0^\infty h(y) \frac{dy}{y^2}.
\]
Now we have
\begin{multline*}
\sum_{|t_\phi-T| <T^\theta}\left|\int_\mathbb{X} P_{m,h}\left(z\right) |\phi\left(z\right)|^2 dV- \frac{3}{\pi}\int_{\mathbb{X}} P_{m,h}(z) dV\right|^2\\
\ll_{\epsilon} T^{\theta+\kappa (B+3/2)+2\epsilon}\|h\|_{W^{B,\infty}} + T^{-2\kappa A+2 }\|h\|_{W^{A,\infty}},
\end{multline*}
and for given $\epsilon_0>0$, we take $\theta = 1/3+\epsilon_0/100$, $\kappa = \epsilon_0/(200 B+300)$, $\epsilon=\epsilon_0/100$ and $A = (2000 B+3000)/\epsilon_0$ to conclude
\begin{multline*}
\sum_{|t_\phi-T| <T^{1/3}}\left|\int_\mathbb{X} P_{m,h}\left(z\right) |\phi\left(z\right)|^2 dV- \int_{\mathbb{X}} P_{m,h}(z) dV\right|^2 \\\leq \sum_{|t_\phi-T| <T^{\theta}}\left|\int_\mathbb{X} P_{m,h}\left(z\right) |\phi\left(z\right)|^2 dV- \int_{\mathbb{X}} P_{m,h}(z) dV\right|^2
\ll_{\epsilon_0,L} T^{1/3+\epsilon_0}\|h\|_{W^{A,\infty}}.
\end{multline*}
\end{proof}
\subsection{\texorpdfstring{$L^p$}{Lp} restrictions}
In \cite{gzs}, lower bound of the form $\gg 1$ is obtained for
\[
I=\int_0^\infty k(y) |\phi(iy)|^2\frac{dy}{y},
\]
using arithmetic Quantum Unique Ergodicity theorem, under certain assumptions on $k(y)$. We present how one can modify the proof in \cite{gzs} to remove assumptions on $k(y)$ for almost all $\phi$ using Theorem \ref{thm}.
\begin{proof}[Proof of Corollary \ref{cor112}]
Fix a compact geodesic segment $\beta =  \{iy~:~ a< y<b \}$ and let $k(y)$ be a nonnegative compactly supported smooth function supported in $[a,b]$. Assume that $0<k(y) \leq 1$ for $y \in (a,b)$, and that $|T-t_\phi|<T^\theta$.

Fix a function $\alpha \in C_0^\infty \mathbb{R}^+$ such that $\alpha(x) = 0$ for $x<1$, $\alpha(x) = 1$ for $x>2$, and $0 \leq \alpha(x) \leq 1$ for $1 \leq x \leq 2$. As in (59) \cite{gzs}, for $M = t_\phi / \eta$ with the constant $\eta$ to be chosen later, we write
\[
I_1 = 4\sum_{m\geq 1} \sum_{n\geq 1} \rho_\phi(m) \rho_\phi(n) \alpha(m/M) G(m,n)
\]
and
\[
I_2 = 4\sum_{m\geq 1} \sum_{n\geq 1} \rho_\phi(m) \rho_\phi(n) \alpha(m/M)\alpha(n/M) G(m,n)
\]
where
\[
G(m,n) =\cosh \left(\pi t_\phi\right) \int_0^\infty K_{it_\phi }(2\pi m y) K_{it_\phi}(2\pi n y)k(y) dy.
\]
Then we have $I \geq 2I_1 - I_2$. We decompose the sum into three parts; $I_j^D$ is the sum with $m=n$, $I_j^S$ is the sum with $1\leq |m-n|<R$, and $I_j^L$ is the sum with $|m-n|\geq R$.

Following the proof of Proposition 6.4 of \cite{gzs}, we find that
\begin{multline*}
|I_j^L| \ll_{l,a,b} R^{-l+\frac{3}{2}}\sqrt{\eta} \left(\frac{1}{t_\phi} \sum_{n \ll_{a,b} t_\phi} |\rho_\phi (n)|^2\right) + R^{-\frac{1}{2}} \left(t_\phi^{-\frac{3}{2}}\sum_{n \ll_{a,b} t_\phi} |\rho_\phi(n)|^2 \sqrt{n}\right) \\
+R^{-\frac{1}{2}}  \left(t_\phi^{\frac{1}{2}}\sum_{n \geq M} \frac{|\rho_\phi(m)|^2}{m^{3/2}}\right)+o(1),
\end{multline*}
and as in (79) \cite{gzs} this is bounded from above by
\begin{equation}\label{finfin:1}
\ll_{a,b,\eta} R^{-\frac{1}{2}} +o(1)
\end{equation}
uniformly in $\eta$ and $R$.

Now note from Lemma \ref{lem:fin} that
\[
G(m,n) = G_{t_\phi , yk(y)} (m,n)
\]
and hence following the proof of Theorem \ref{app}, we may write
\[
I_1^S = \frac{2\pi}{T}\sum_{1\leq m \leq R} \sum_{n\geq 1} \rho_\phi(n) \rho_\phi(n\pm m) \alpha\left(\frac{n}{M}\right) k_{m,yk(y)} \left(\frac{n}{T}\right) + O_{A,a,b,\epsilon}(\|k\|_{W^{A+1,\infty}}(T^{-1/2+\epsilon}R^4+R^{1-A}T^\epsilon)).
\]
and
\begin{multline*}
I_2^S = \frac{2\pi}{T}\sum_{1\leq m \leq R} \sum_{n\geq 1} \rho_\phi(n) \rho_\phi(n\pm m) \alpha\left(\frac{n}{M}\right)\alpha\left(\frac{n\pm m}{M}\right) k_{m,yk(y)} \left(\frac{n}{t_\phi}\right) \\
+ O_{A,a,b,\epsilon}(\|k\|_{W^{A+1,\infty}}(T^{-1/2+\epsilon}R^3+R^{-A}T^\epsilon)).
\end{multline*}
Since $\alpha((n\pm m)/M) - \alpha (n/M)\ll m/M < \eta R T^{-1}$, we have
\begin{multline*}
I_2^S = \frac{2\pi}{T}\sum_{1\leq m \leq R} \sum_{n\geq 1} \rho_\phi(n) \rho_\phi(n\pm m) \alpha\left(\frac{n}{M}\right)^2 k_{m,yk(y)} \left(\frac{n}{T}\right) \\
+ O_{A,a,b,\epsilon}(\|k\|_{W^{A+1,\infty}}(T^{-1/2+\epsilon}R^3+R^{-A}T^\epsilon+\eta RT^{-1})).
\end{multline*}

For $j=1,2$, for any given $\epsilon>0$ and $1>\theta>1/3$, we know from Theorem \ref{thm} and \eqref{fin:partial1} that there exists sufficiently large $B>0$ depending only on $\epsilon$ and $\theta$ such that
\begin{align*}
&\sum_{|t_\phi -T|<T^\theta}\left|\sum_{1\leq m \leq R} \sum_{n\geq 1} \rho_\phi(n) \rho_\phi(n\pm m) \alpha\left(\frac{n}{M}\right)^j k_{m,yk(y)} \left(\frac{n}{T}\right)\right|^2 \\
\leq &R\sum_{1\leq m \leq R} \sum_{|t_\phi -T|<T^\theta}\left|\sum_{n\geq 1} \rho_\phi(n) \rho_\phi(n\pm m) \alpha\left(\frac{n}{M}\right)^j k_{m,yk(y)} \left(\frac{n}{T}\right)\right|^2\\
=& O_{a,b,\epsilon,\theta,\eta} \left(T^{2+\theta+\epsilon}  R^{B+7/2} \|k\|_{W^{B,\infty}}^2\right).
\end{align*}
We therefore conclude that
\[
\sum_{|t_\phi -T|<T^\theta} |I_j^S(\phi)|^2 \ll_{a,b,\epsilon,\theta,\eta,A}T^{\theta+\epsilon}  R^{B+7/2} \|k\|_{W^{B,\infty}} +\|k\|_{W^{A+1,\infty}}^2(T^{\theta+\epsilon}R^8+R^{-2A+2}T^{1+\theta+\epsilon}).
\]
For given $\epsilon_0>0$, we choose $\theta=1/3+\epsilon_0/100$ $R = T^{\epsilon_0/(100B+400)}$, $\epsilon=\epsilon_0/100$, $A=(1000B+4000)/\epsilon_0$, so that
\[
\sum_{|t_\phi -T|<T^{1/3}} |I_j^S(\phi)|^2 \ll_{k,\epsilon_0,\eta} T^{1/3+\epsilon_0/2},
\]
so by Chebyshev's inequality, all but $O_{k,\epsilon_0,\eta}(T^{1/3+\epsilon_0})$ $\phi$ with $|t_\phi-T|<T^{1/3}$ satisfies
\[
|I_j^S(\phi)| < T^{-\epsilon_0/4}.
\]
Also for such $R$, we infer from \eqref{finfin:1} that $I_j^L = o_{k,\epsilon_0,\theta,\eta}(1)$.

For the diagonal term, (78) \cite{gzs} implies that
\[
I_1^D=I_2^D\sim \int_0^\infty k(y)\frac{dy}{y} +O_{a,b}(\eta^{-1}),
\]
and so we complete the proof of Corollary \ref{cor112} by fixing a sufficiently large $\eta$ depending only on $a,b$.

\end{proof}

\begin{proof}[Proof of Corollary \ref{cor2}]
Fix $1/100>\epsilon_0>0$ and let $\theta =1/3+\epsilon_0/4$ and $\epsilon=\epsilon_0/4$. Let  $\psi \in C_0^\infty (1/2,1)$ be a test function such that $0 \leq \psi(y) \leq 1$ and
\[
\alpha = \int_0^\infty \psi(y)dy \geq 1/4.
\]
Then Theorem \ref{thm} with $m=0$ and $X= \pi T^{1/4}$ implies that there exists a sufficiently large constant $A>0$ depending only on $\theta$ and $\epsilon$ that
\[
\sum_{|t_\phi - T|<T^\theta} \left|\sum_{n} |\rho_\phi (n)|^2 \psi\left(\frac{n}{X}\right)- \frac{12}{\pi^2}\alpha T^{1/4}\right|^2 \ll_{\epsilon,\theta} T^{1+1/4+\theta+\epsilon} \|\psi\|_{W^{A,\infty}}^2 \ll_{\epsilon, \theta}T^{1+1/4+\theta+\epsilon},
\]
hence
\begin{align*}
\sum_{|t_\phi - T|<T^{1/3}} \left|\sum_{n} |\rho_\phi (n)|^2 \psi\left(\frac{n}{X}\right)- \frac{12}{\pi^2}\alpha T^{1/4}\right|^2&\leq
\sum_{|t_\phi - T|<T^{1/3+\epsilon_0/4}} \left|\sum_{n} |\rho_\phi (n)|^2 \psi\left(\frac{n}{X}\right)- 4\alpha T^{1/4}\right|^2 \\
&\ll_{\epsilon_0} T^{1+1/4+1/3+\epsilon_0/2}.
\end{align*}
We infer from Chebyshev's inequality that
\begin{equation}\label{lower}
\left|\sum_{n} |\rho_\phi (n)|^2 \psi\left(\frac{n}{X}\right)- \frac{12}{\pi^2}\alpha T^{1/4}\right|^2<T^{1/2-\epsilon_0/2}
\end{equation}
for all but $\ll_{\epsilon_0}T^{1+1/4+1/3+\epsilon_0/2-(1/2-\epsilon_0/2)}=T^{13/12+\epsilon_0}$ forms. 

For $\phi$ satisfying \eqref{lower}, we have
\[
\sum_{n\leq T^{1/4}} |\rho_\phi (n)|^2 \geq \sum_{n} |\rho_\phi (n)|^2 \psi\left(\frac{n}{X}\right) \geq \frac{3}{\pi^2} T^{1/4}- T^{1/4-\epsilon_0/4} \gg_{\epsilon_0} T^{1/4}.
\]

To prove Corollary \ref{cor2}, recall Equation A.12 of \cite{iw}:
\[
\sum_{T<t_\phi<T+1}|\phi\left(z\right)|^2\left|\sum_{n\leq N} \alpha_n \rho_\phi\left(n\right) \right|^2
\ll_\epsilon N^\epsilon T^\epsilon\left(T\sum_{n \leq N}|\alpha_n|^2+\left(N+N^{1/2}y\right)T^{1/2}\left(\sum_{n \leq N} |\alpha_n|\right)^2\right).
\]
\begin{remark}
Note that the last term in Equation A.12 in the original manuscript \cite{iw} reads $\left(\sum |\alpha_n|^2\right)^2$, and it is a typo. The correct term is $\left(\sum |\alpha_n|\right)^2$, as  above.
\end{remark}
Assume that $z$ is in a fixed compact set $C\subset \mathbb{X}$. Choosing $\alpha_n=\rho_\phi\left(n\right)$ and $N=T^{1/4}$, we have
\begin{align*}
|\phi\left(z\right)|^2 \left(\sum_{n\leq T^{1/4}} |\rho_\phi\left(n\right)|^2 \right)^2 &\ll_{C,\epsilon} T^\epsilon \left(T \sum_{n \leq T^{1/4}}|\rho_\phi(n)|^2+T^{3/4}\left(\sum_{n \leq T^{1/4}}|\rho_\phi(n)|\right)^2\right)\\
&\leq T^\epsilon \left(T \sum_{n \leq T^{1/4}}|\rho_\phi(n)|^2+T^{3/4}\left(\sum_{n \leq T^{1/4}}|\rho_\phi(n)|^2\right)\left(\sum_{n \leq T^{1/4}}1^2\right)\right)\\
&\ll T^{1+\epsilon}\sum_{n \leq T^{1/4}}|\rho_\phi(n)|^2
\end{align*}
and so
\[
|\phi(z)| \ll_\epsilon T^{1/2+\epsilon} \left(\sum_{n \leq T^{1/4}}|\rho_\phi(n)|^2\right)^{-1/2}
\]
Therefore for $\phi$ satisfying
\begin{equation}\label{linfty}
\sum_{n\leq T^{1/4}} |\rho_\phi\left(n\right)|^2 \gg_{\epsilon_0} T^{1/4},
\end{equation}
we have
\[
\sup_{z \in C}|\phi\left(z\right)| \ll_{C,\epsilon,\epsilon_0} t_\phi^{\frac{3}{8}+\epsilon}.
\]
We complete the proof by choosing $\epsilon=\epsilon_0$.
\end{proof}
\subsection{Application to the number of nodal domains}
Finally, we combine Corollary \ref{cor112} and Corollary \ref{cor2} to prove Theorem \ref{thm6}.
\begin{proof}[Proof of Theorem \ref{thm6}]
Fix a geodesic segment $\beta \subset \left\{iy~:~y>0\right\}$, and assume that it is given by $\{iy~:~a<y<b\}$. Let $M_1\left(\phi\right)$ be the maximum of period integrals of $\phi$ taken over segments of $\beta$,
\[
M_1\left(\phi\right)=\sup_{a < \alpha_1 <\alpha_2 <b}\left|\int_{\alpha_1}^{\alpha_2} \phi\left(iy\right) \frac{dy}{y}\right|.
\]
Let $S_\beta(\phi)$ be the number of sign changes of $\phi$ along $\beta$. Denote by $a<\xi_1(\phi) < \xi_2(\phi) < \ldots < \xi_{S_\beta(\phi)}(\phi)<b$ the zeros of $\phi(iy)$ on the interval $(a,b)$ where $\phi(iy)$ changes sign. Put $\xi_0(\phi) = a$ and $\xi_{S_\beta(\phi)+1}=b$. Then we have
\begin{align*}
\int_a^b |\phi(iy)| dy &= \sum_{j=1}^{S_\beta(\phi)+1} \left|\int_{\xi_{j-1}(\phi)}^{\xi_{j}(\phi)}\phi(iy) dy\right|\\
&\leq \sum_{j=1}^{S_\beta(\phi)+1} M_1(\phi),
\end{align*}
hence
\[
\|\phi\|_{L^1\left(\beta\right)} \leq M_1\left(\phi\right) \left(S_\beta\left(\phi\right)+1\right),
\]
From Equation (6) in \cite{gzs}, we have
\[
N^\beta \left(\phi\right) \geq \frac{1}{2}S_\beta\left(\phi\right) +1,
\]
and so it is sufficient to consider $S_\beta\left(\phi\right)$.

Firstly, recall from Section 6.3 of \cite{gzs} that one may bound $M_1\left(\phi\right)$ in terms of integral of Maass $L$-function as follows:
\begin{equation}\label{final:1}
M_1\left(\phi\right) \ll_\epsilon t_\phi^{-1/4+\epsilon}\int_0^{2t_\phi}\left|L\left(\frac{1}{2}+it,\phi\right)\right|\left(1+|t-t_\phi|\right)^{-1/4}\min\left\{1,\frac{1}{t}\right\}dt+e^{-ct_\phi}
\end{equation}
for some constant $c>0$. Assuming that $T<t_\phi<T+1$, we may rewrite \eqref{final:1} as
\begin{equation}\label{final:2}
M_1\left(\phi\right) \ll_\epsilon T^{-1/4+\epsilon}\int_0^{2(T+1)}\left|L\left(\frac{1}{2}+it,\phi\right)\right|\left(1+|t-T|\right)^{-1/4}\min\left\{1,\frac{1}{t}\right\}dt+e^{-cT}.
\end{equation}
Now note that
\begin{align*}
M_1(\phi)^2 \ll_\epsilon & T^{-1/2+2\epsilon}\left(\int_0^{2(T+1)}\left|L\left(\frac{1}{2}+it,\phi\right)\right|\left(1+|t-T|\right)^{-1/4}\min\left\{1,\frac{1}{t}\right\}dt\right)^2+e^{-2cT}\\
\leq& T^{-1/2+2\epsilon}\int_0^{2(T+1)}\left(1+|t-T|\right)^{-1/4}\min\left\{1,\frac{1}{t}\right\}dt\\
&\times \int_0^{2(T+1)}\left|L\left(\frac{1}{2}+it,\phi\right)\right|^2\left(1+|t-T|\right)^{-1/4}\min\left\{1,\frac{1}{t}\right\}dt+e^{-2cT}.
\end{align*}
by Cauchy--Schwartz inequality. Recall an average version of Lindel{\"o}f Hypothesis for Hecke--Maass $L$-functions \cite{jut}
\[
\sum_{T<t_\phi<T+1}|\rho_\phi(1)|^2\left|L\left(\frac{1}{2}+it,\phi\right)\right|^2 \ll_\epsilon \left(T+t^{2/3}\right)^{1+\epsilon}.
\]
Applying \eqref{triv}, we find that for $0<t <2(T+1)$,
\[
\sum_{T<t_\phi<T+1}\left|L\left(\frac{1}{2}+it,\phi\right)\right|^2 \ll_\epsilon T^{1+\epsilon}.
\]
Combining these two estimates, we obtain
\begin{align*}
\sum_{T<t_\phi<T+1}M_1(\phi)^2 \ll_\epsilon & T^{-1/2+2\epsilon}\int_0^{2(T+1)}\left(1+|t-T|\right)^{-1/4}\min\left\{1,\frac{1}{t}\right\}dt\\
&\times \int_0^{2(T+1)}\left(\sum_{T<t_\phi<T+1}\left|L\left(\frac{1}{2}+it,\phi\right)\right|^2\right)\left(1+|t-T|\right)^{-1/4}\min\left\{1,\frac{1}{t}\right\}dt+e^{-cT}\\
\ll_\epsilon &T^{1/2+3\epsilon}\left(\int_0^{2(T+1)}\left(1+|t-T|\right)^{-1/4}\min\left\{1,\frac{1}{t}\right\}dt\right)^2+e^{-cT}.
\end{align*}
Note that
\begin{align*}
&\int_0^{2(T+1)}\left(1+|t-T|\right)^{-1/4}\min\left\{1,\frac{1}{t}\right\}dt \\
=& \int_0^{T/2}\left(1+|t-T|\right)^{-1/4}\min\left\{1,\frac{1}{t}\right\}dt + \int_{T/2}^{2(T+1)}\left(1+|t-T|\right)^{-1/4}\min\left\{1,\frac{1}{t}\right\}dt\\
\ll& T^{-1/4} \int_0^{T/2} \min\left\{1,\frac{1}{t}\right\} dt + T^{-1}\int_{T/2}^{2(T+1)}\left(1+|t-T|\right)^{-1/4}dt\\
\ll& T^{-1/4}\log T,
\end{align*}
from which we conclude that
\[
\sum_{T<t_\phi<T+1}M_1\left(\phi\right)^2 \ll_\epsilon T^\epsilon,
\]
hence
\[
\sum_{|T-t_\phi|<T^{1/3}}M_1\left(\phi\right)^2 \ll_\epsilon T^{1/3+\epsilon}.
\]

Now fix $1/12>\epsilon_0>0$. We infer from Chebyshev's inequality that
\[
M_1(\phi) <t_\phi^{-1/2+\epsilon_0/2}
\]
for all but $O_{\epsilon_0} \left(T^{4/3-\epsilon_0/2}\right)$ forms in $\{\phi~:~|t_\phi-T|<T^{1/3}\}$. Recall from Corollary \ref{cor112} with $\epsilon=2/3$ that all but $O\left(T\right)$ even forms in  $\{\phi~:~|t_\phi-T|<T^{1/3}\}$ satisfy
\[
\|\phi\|_{L^2(\beta)}^2 \gg_{\beta} 1
\]
and from Corollary \ref{cor2} with $\epsilon=\epsilon_0/2$ that all but $O_{\epsilon_0}\left(T^{13/12+\epsilon_0/2}\right)$ forms in $\{\phi~:~|t_\phi-T|<T^{1/3}\}$ satisfy
\[
\|\phi\|_{L^\infty(\beta)} \ll_{\beta,\epsilon_0} t_\phi^{3/8+\epsilon_0/2}
\]
Therefore for all but $O_{\epsilon_0} \left(T^{4/3-\epsilon_0/2}\right)$ even forms in $\{\phi~:~|t_\phi-T|<T^{1/3}\}$ satisfy
\[
S_\beta(\phi)+1 > \frac{\|\phi\|_{L^1(\beta)}}{M_1(\phi)} \geq \frac{\|\phi\|_{L^2(\beta)}^2}{\|\phi\|_{L^\infty(\beta)}M_1(\phi)} \gg_{\beta,\epsilon_0} t_\phi^{1/8-\epsilon_0}.
\]
This completes the proof of Theorem \ref{thm6}.
\end{proof}
\begin{remark}
It is conjectured that $L^\infty$-norm of a Maass form in a fixed compact set is bounded from above by $t_\phi^\epsilon$ for any $\epsilon>0$ \cite{iw}. If this conjecture were true, then it follows automatically that
\begin{equation}\label{lp}
\int_a^b |\phi\left(iy\right)|^p dy \ll_{p,\epsilon} t_\phi^\epsilon.
\end{equation}
Now recall the H\"{o}lder's inequality:
\[
\|f\|_{L^p}^p\|f\|_{L^1}^{p-2}\geq \|f\|_{L^2}^{2\left(p-1\right)}
\]
for $p>2$. In the proof of Theorem \ref{thm6}, we used $p=+\infty$ to obtain a lower bound of the form $\gg_\epsilon t_\phi^{-3/8-\epsilon}$ for the $L^1$-restriction. This can be improved if we know \eqref{lp} for some $p>2$, and would yield a lower bound of the form $\gg_\epsilon t_\phi^{-\epsilon}$, which is essentially optimal.
\end{remark}

\bibliographystyle{alpha}
\bibliography{bibfile}

\end{document}